\newcommand{\stopthm}{\hfill$\square$\medskip}
\newcommand{\cF}{{\mathcal F}} 
\newcommand{\cP}{{\mathcal P}} 
\newcommand{\pa}{\partial}
\newcommand{\Up}{\Upsilon}
\newcommand{\Om}{\Omega}
\newcommand{\La}{\Lambda}
\newcommand{\ep}{\epsilon}
\newcommand{\ga}{\gamma}  
\newcommand{\om}{\omega}  
\newcommand{\de}{\delta}  
\newcommand{\db}{\underline{\delta}}
\newcommand{\sigb}{\overline{\sigma}}
\newcommand{\De}{\Delta}  
\newcommand{\na}{\nabla}  
\newcommand{\p}{\varphi}  
\newcommand{\ph}{\widehat{\varphi}}  
\newcommand{\Gat}{{\tilde \Gamma}}
\newcommand{\tr}{\operatorname{tr}}
\newcommand{\cG}{{\mathcal G}}
\newcommand{\cO}{{\mathcal O}}
\newcommand{\cQ}{{\mathcal Q}}
\newcommand{\cV}{{\mathcal V}}
\newcommand{\cT}{{\mathcal T}}
\newcommand{\cS}{{\mathcal S}}
\newcommand{\cL}{{\mathcal L}}
\newcommand{\cJ}{{\mathcal J}}  
\newcommand{\gt}{{\tilde g}}
\newcommand{\gh}{{\widehat g}} 
\newcommand{\hh}{{\widehat h}}
\newcommand{\Rt}{{\tilde R}}
\newcommand{\R}{{\mathbb R}}
\newcommand{\nf}{\infty}
\newcommand{\Vol}{\operatorname{Vol}}
\newcommand{\Ric}{\operatorname{Ric}}
\renewcommand{\tilde}{\widetilde} 
\newcommand{\wh}{\widehat}
\theoremstyle{plain}
\newtheorem{theorem}{Theorem}[section]
\newtheorem{proposition}[theorem]{Proposition}
\newtheorem{corollary}[theorem]{Corollary}
\theoremstyle{definition}
\newtheorem{definition}[theorem]{Definition}
\theoremstyle{remark}
\numberwithin{equation}{section}
\title[Renormalized Volume Coefficients]{Extended Obstruction Tensors and
  Renormalized Volume Coefficients}     
\author{C. Robin Graham}
\address{Department of Mathematics, University of Washington,
Box 354350\\
Seattle, WA 98195-4350}
\email{robin@math.washington.edu}
\begin{document}

\maketitle

\thispagestyle{empty}

\renewcommand{\thefootnote}{}
\footnotetext{Partially supported by NSF grant \# DMS 0505701.}   
\renewcommand{\thefootnote}{1}

\section{Introduction}\label{intro}
In recent years there has been a great deal of progress on   
the so-called $\sigma_k$-Yamabe problem.  In \cite{CF}, Alice Chang and Hao
Fang have suggested that a variant of this problem might also be fruitful
to study.  The main goal of this paper is to investigate the algebraic  
structure under conformal transformation of the renormalized volume 
coefficients, the curvature quantites considered by Chang-Fang.  
A key ingredient in the investigation is the introduction of ``extended 
obstruction tensors'', which are anticipated to be of independent  
interest.  These are natural tensors associated to a  
pseudo-Riemannian metric $g$ which turn out to be building blocks for the   
expansion of the ambient or Poincar\'e metric determined by $g$ and thus
also for the renormalized volume coefficients.     

The $\sigma_k$-Yamabe problem was introduced by Jeff Viaclovsky in
\cite{V}.  Let 
$$
P_{ij}=\frac{1}{n-2}\left(R_{ij}-\frac{R}{2(n-1)}g_{ij}\right)
$$
denote the Schouten tensor of a metric $g$ on a manifold $M$ of 
dimension $n\geq 3$, and let $g^{-1}P$ denote the endomorphism 
$P^i{}_j$ obtained by raising an index.  For $1\leq k\leq n$, the
$\sigma_k$-Yamabe problem 
is to find a metric in a given conformal class for which 
$\sigma_k(g^{-1}P)$ is constant, where 
$\sigma_k(A)$ denotes the \mbox{$k$-th} 
elementary symmetric function of the eigenvalues of an endomorphism $A$.
We set $\sigma_k=0$ for $k>n$.    
For $k=1$, $\sigma_1(g^{-1}P)$ is a multiple of the scalar curvature of
$g$, so this is the Yamabe problem.  For $2\leq k\leq n$,  
$\sigma_k(g^{-1}P)$ is a second order fully nonlinear operator in
the conformal factor.  

Variational methods have played an important role in the study of this
problem.  In dimensions $n>2$, the   
Yamabe equation $R=c$ is the Euler-Lagrange equation for the total
scalar 
curvature functional $\int_M R\,dv_g$ under conformal variations subject to
the constraint $\Vol_g(M)=1$.  Of course, this fails when $n=2$ because 
of the Gauss-Bonnet Theorem.  For $k=2$ the analogous special dimension 
is $n=4$.  In this dimension, the total $\sigma_2$ curvature 
$\int_M\sigma_2(g^{-1}P)\,dv_g$ is a conformal invariant.
If $n\neq 4$, the natural generalization of the   
variational characterization of the Yamabe equation holds:   
the equation $\sigma_2(g^{-1}P)=c$ is 
the Euler-Lagrange equation for the 
functional $\int_M \sigma_2(g^{-1}P)\,dv_g$ under conformal variations
subject to the constraint $\Vol_g(M)=1$.  

Both of these properties fail for general metrics when $3\leq k\leq n$.   
The special dimension is now $n=2k$.  But it is no longer true that 
$\int_M \sigma_k(g^{-1}P)\,dv_g$ is a conformal invariant in dimension
$2k$.  Nor is it true  for $n\neq 2k$ that the equation  
$\sigma_k(g^{-1}P)=c$ is the constrainted Euler-Lagrange equation
for the total $\sigma_k$ functional.   
Viaclovsky did show that  
both properties hold if $g$ is locally conformally flat.  
But Branson and Gover proved in \cite{BG} that if $3\leq k\leq n$  
and $g$ is not locally conformally flat, then the equation 
$\sigma_k(g^{-1}P)=c$ is not the Euler-Lagrange equation of any
functional.     

The renormalized volume coefficients of $g$, denoted here by $v_k(g)$,  
arose in the late '90's in the physics literature in the context of
the AdS/CFT correspondence.  A mathematical discussion is contained in 
\cite{G}.  They are defined in terms of the
expansion of the ambient or Poincar\'e metric associated to $g$ in the
sense of \cite{FG1}.  One searches for a smooth 1-parameter family 
of metrics $h_r$ on $M$ so that $h_0=g$ and so that the metric 
\begin{equation}\label{gplus}
g_+=\frac{dr^2+h_r}{r^2}
\end{equation} 
on $M\times (0,\epsilon)$ is an asymptotic solution to $\Ric(g_+)=-ng_+$ at
$r=0$.  This together with the condition that $h_r$ be even in $r$ uniquely
determines $h_r$ to infinite order if $n$ is odd, however only to order
$n$ if $n$ is even, at which point there is a formal obstruction to finding
a solution to the next order.  The trace part of the Taylor coefficient at
order $n$ is determined but the determination of the trace-free part is
obstructed by a trace-free symmetric 2-tensor called the ambient
obstruction tensor.  

Since $h_r$ is even in $r$, it is natural to introduce a new variable   
$\rho = -\frac12 r^2$ and set $g_\rho = h_r$.  The ambient metric 
coefficients are the determined Taylor coefficients
$\pa_\rho^kg_\rho|_{\rho=0}$. 
These are given locally in terms of the initial metric $g_0=g$; each of
them can be written as a  polynomial natural  
tensor expressible in terms of the curvature tensor of $g$ and its
covariant derivatives.   The renormalized volume coefficients are defined
by the expansion of the volume form: 
\begin{equation}\label{ambexpansion}
\left(\frac{\det g_\rho}{\det g_0}\right)^{1/2} 
\sim 1+\sum_{k=1}^\infty v_k\rho^{k}.  
\end{equation}
If $n$ is odd, $v_k(g)$ is defined for all $k\geq 1$.  If $n$  
is even, $v_k(g)$ is defined only for $k\leq n/2$ for general $g$, although  
$v_k(g)$ is defined for all $k\geq 1$ also in even dimensions if $g$ is
Einstein or 
locally conformally flat.  A more detailed discussion is contained in
\S\ref{eot}. 

The insight of Chang-Fang is to consider the $v_k(g)$ in the context of the  
properties satisfied by the $\sigma_k(g^{-1}P)$.  
Just comparing the formulae for these quantities shows that
$v_k(g)=\sigma_k(g^{-1}P)$ if $k=1$ or 2.  In \cite{GJ} it is shown 
that this holds also for $k\geq 3$ if $g$ is  
locally conformally flat.  Moreover,      
$v_k(g)$ always satisfies the two properties discussed above which failed 
for $\sigma_k(g^{-1}P)$ for $3\leq k \leq n$ for general metrics.  One of
the first important 
properties established of the $v_k$ was that in dimension $n=2k$,
$\int_Mv_k(g)\,dv_g$ is a conformal 
invariant for general metrics (a proof is given in \cite{G}).  And the new
result of 
Chang-Fang is that the variational characterization holds for
$v_k(g)$:  for $n\neq 2k$, the equation $v_k(g)=c$ is     
the Euler-Lagrange equation for the functional $\int_Mv_k(g)\,dv_g$ under 
conformal variations subject to the constraint $\Vol_g(M)=1$.  This
collection of facts suggests a strong parallel between the $v_k(g)$ and the  
$\sigma_k(g^{-1}P)$, and even that from some points of view the $v_k(g)$ 
have better properties.  

However, study of the $v_k(g)$ involves significant challenges not shared
by the $\sigma_k(g^{-1}P)$.  Firstly, for $k\geq 3$, $v_k(g)$ depends on   
derivatives of the curvature of $g$.  In fact, for $k\geq 2$, $v_k(g)$
depends on derivatives of curvature of order up to $2k-4$.  Secondly, 
the $v_k(g)$ are defined via an indirect, highly nonlinear, inductive 
algorithm:  first one solves the Einstein equation formally to determine 
$g_\rho$ and then expands its volume form to obtain $v_k(g)$.  
They are algebraically complicated and no explicit formula is known for
general $k$.      

A formula for $v_3$ was given in \cite{GJ}; it is not difficult to carry
out the algorithm explicitly by hand to this order.  The result is:  
\begin{equation}\label{v3}
v_3(g)=\sigma_3(g^{-1}P)+\frac{1}{3(n-4)}P^{ij}B_{ij},
\end{equation}
where $B_{ij}$ denotes the Bach tensor of $g$.  
It is well-known that 
under conformal change $\gh=e^{2\om}g$, the transformation law of the Bach
tensor involves just first derivatives of the conformal factor.  Thus an 
immediate consequence of \eqref{v3} and the conformal 
tranformation law   
\begin{equation}\label{Ptransform}
\widehat{P}_{ij}=P_{ij}-\om_{ij} +\om_i\om_j-\tfrac12 \om_k\om^kg_{ij}
\end{equation}
of the Schouten tensor is the fact that the transformation law of
$v_3$ involves at most second order derivatives of $\om$.  Thus for a fixed 
metric $g$, the equation $v_3(e^{2\om}g)=c$ is second order in $\om$.  It
is this equation that Chang-Fang propose to study by analogy with the 
$\sigma_k$-Yamabe problem.  

In this paper, it is proved that the conformal transformation law involves
at most second order derivatives of $\om$ for all the $v_k$, as well as for
all the ambient metric coefficients.    
\begin{theorem}\label{atmost2}
Under conformal change $\gh=e^{2\om}g$, the conformal transformation laws
of the $\pa_\rho^kg_\rho|_{\rho=0}$ and  
the $v_k$ involve at most second derivatives of $\om$.   If $n$ is odd, 
this is true for all $k$.  If $n$ is even, it is true for  
$\pa_\rho^kg_{ij}|_{\rho=0}$ for $1\leq k\leq n/2-1$, and for     
$g^{ij}\pa_\rho^{n/2}g_{ij}|_{\rho=0}$ and $v_k$ for $1\leq k\leq n/2$.        
\end{theorem}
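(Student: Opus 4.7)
The plan is to exploit the conformal invariance of the Poincar\'e metric. Let $g_+ = r^{-2}(dr^2 + g_\rho)$ be in normal form with respect to $g$. Since $g_+$ is determined modulo diffeomorphism by the conformal class of $g$, the same metric $g_+$ admits a second normal form $\hat r^{-2}(d\hat r^2 + \hat g_{\hat\rho})$ with respect to $\hat g = e^{2\omega}g$ for an appropriate defining function $\hat r$ and new boundary coordinates. Writing $\hat r = re^{\Upsilon(r,x)}$ with $\Upsilon|_{r=0} = \omega$, the geodesic normalization $|d\hat r|^2_{g_+} = \hat r^2$ reduces to the Hamilton--Jacobi equation
\[
(1 + r\Upsilon_r)^2 + r^2 g_\rho^{ij}\Upsilon_i\Upsilon_j = 1,
\]
which determines $\Upsilon$ as an even function of $r$. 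The accompanying change of $x$-coordinates, needed to eliminate mixed terms $d\hat r\,d\hat x^i$, is fixed by following the $g_+$-gradient flow of $\hat r$ and is governed by an ODE whose right-hand side involves only $\Upsilon$ and its first spatial derivatives.

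Substituting into $g_+$ and rearranging into normal form with respect to $\hat r$ then yields $\pa_{\hat\rho}^k \hat g_{\hat\rho}|_0$ order by order as an explicit expression in $g$, the $\pa_\rho^j g_\rho|_0$ for $j \leq k$, and the Taylor coefficients of $\Upsilon$ in $r$. The statement for $v_k$ follows because, via the expansion \eqref{ambexpansion}, $v_k$ is polynomial in the $\pa_\rho^j g_\rho|_0$ for $j \leq k$. The even-dimensional case $k = n/2$ is handled similarly: although only the trace of $\pa_\rho^{n/2}g_\rho|_0$ is determined by the ambient construction, the argument applied to $g^{ij}\pa_\rho^{n/2}g_{ij}|_{\rho=0}$ covers that case as well.

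The main obstacle is exhibiting a cancellation of higher-order derivatives of $\omega$. Iterating the Hamilton--Jacobi equation, one finds that $\pa_r^{2k}\Upsilon|_{r=0}$ already involves $x$-derivatives of $\omega$ of order comparable to $k$, and the ambient metric coefficients themselves are polynomial in covariant derivatives of the Schouten tensor, whose conformal transformations similarly bring in arbitrarily high derivatives of $\omega$. The content of Theorem \ref{atmost2} is that all derivatives of $\omega$ of order $\geq 3$ must miraculously cancel in the combination forming $\pa_{\hat\rho}^k \hat g_{\hat\rho}|_0$. To organize this cancellation I would introduce the extended obstruction tensors $\Omega^{(k)}$ promised in the introduction, designed as building blocks for $\pa_\rho^k g_\rho|_0$, each of which has a conformal transformation law involving only $\omega$, $\nabla\omega$, and $\nabla^2\omega$ (generalizing the known transformation laws of the Bach tensor and of the ambient obstruction tensor). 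If $\pa_\rho^k g_\rho|_0$ can then be expressed as a polynomial in the $\Omega^{(j)}$ with $j \leq k$ together with the Schouten tensor $P$, the desired bound follows from the transformation law \eqref{Ptransform} of $P$.
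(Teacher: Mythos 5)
Your proposal correctly identifies the strategy of the paper's first proof, but as written it has a genuine gap: both pillars of the argument are left as unverified hypotheses. The Hamilton--Jacobi/gradient-flow computation in your first two paragraphs is a correct description of how the two normal forms of $g_+$ are related, but, as you yourself observe, it expresses $\partial_{\hat\rho}^k\hat g_{\hat\rho}|_0$ in terms of Taylor coefficients of $\Upsilon$ that involve derivatives of $\omega$ of unbounded order, and you supply no mechanism forcing these to cancel; the theorem is precisely the assertion that they do, so this part proves nothing by itself. Your final paragraph then pivots to the right idea --- decompose $\partial_\rho^k g_\rho|_0$ into building blocks with controlled transformation laws --- but states both required facts in the conditional: (i) that tensors $\Omega^{(j)}$ with transformation laws involving at most second derivatives of $\omega$ exist, and (ii) that $\partial_\rho^k g_\rho|_0$ is a polynomial in $P$ and the $\Omega^{(j)}$. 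Neither is routine. In the paper, (i) is obtained by \emph{defining} $\Omega^{(k)}_{ij}$ as the ambient curvature component $\tilde R_{\infty ij\infty,\infty\cdots\infty}|_{\rho=0,\,t=1}$; its transformation law (Propositions~\ref{ctransform} and \ref{obslaw}) then involves only \emph{first} derivatives of $\omega$, because $\tilde\nabla^{k-1}\tilde R$ transforms tensorially under the diffeomorphism relating the two ambient metrics and the relevant Jacobian matrix \eqref{pmatrix} contains only $\omega_i$. Point (ii) is Theorem~\ref{coefficientform}, proved by induction on $k$ using the explicit formula \eqref{firstR} for $\tilde R_{\infty ij\infty}$ and the recursion \eqref{inductR}; it is a statement about arbitrary metrics of the form \eqref{ambmetric}, established by the Christoffel computation, not merely by the expectation that suitable building blocks exist.

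It is worth noting that the paper's second proof shows how a diffeomorphism argument of the kind you begin with can be made to work: one computes the compensating diffeomorphism only infinitesimally in $\omega$, where it is given by the explicit integral \eqref{Yformula} and the variation \eqref{newlaw} manifestly involves at most two derivatives of $\omega$, and one then upgrades from the infinitesimal to the full transformation law via the general Proposition~\ref{linfull}. Your nonlinear Hamilton--Jacobi version lacks any such upgrading principle, which is exactly where the cancellation you call ``miraculous'' would have to be established.
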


We give two different proofs of Theorem~\ref{atmost2}, each of which yields
further information.  The first proof proceeds by establishing that each of
the 
determined ambient metric coefficients $\pa_\rho^kg_{ij}|_{\rho=0}$ 
can be written in terms of simpler building blocks, each of which has a 
conformal transformation law involving at most second derivatives of $\om$.
The building blocks consist of the Schouten tensor and a family
$\Om^{(k)}_{ij}$ of trace-free symmetric natural 2-tensors 
which we call the extended obstruction tensors.  The $\Om^{(k)}_{ij}$ 
are defined for all $k\geq 1$ if $n$ is odd, but only for 
$1\leq k\leq n/2-2$ if $n$ is even.  The name derives from the fact that 
when the
dimension is viewed as a formal parameter, $\Om^{(k)}_{ij}$ 
has a simple pole at dimension $n=2k+2$ whose residue is a multiple of the  
obstruction tensor in that dimension.  For example, 
\begin{equation}\label{omega1}
\Om^{(1)}_{ij}=\frac{1}{4-n}B_{ij},
\end{equation}
and the obstruction tensor in dimension 4 is the Bach tensor $B_{ij}$.    
The result asserting that the ambient metric coefficients can be written in
terms of the building blocks is the following.     
\begin{theorem}\label{coefficientform}
Let $k\geq 1$.  There is a linear combination 
$\cG^{(k)}_{ij}\left(P, \Om^{(1)},\ldots,\Om^{(k-1)}\right)$
of partial contractions with respect to $g^{-1}$ of 
the Schouten tensor $P$   
and the $\Om^{(l)}$, $1\leq l\leq k-1$, such that the coefficients of
$\cG^{(k)}_{ij}$ are independent of $n$, and such that 
the ambient metric coefficients in dimension $n$ are given by:
\begin{equation}\label{derivform}
\pa_\rho^kg_{ij}|_{\rho=0} = \cG^{(k)}_{ij}\left(P,  
\Om^{(1)},\ldots,\Om^{(k-1)}\right), 
\end{equation}
for all $k\geq 1$ if $n$ is odd and for $1\leq k\leq n/2-1$ if $n$ is   
even.  Additionally, if $k\geq 2$, there is a linear combination 
$\cT^{(k)}\left(P, \Om^{(1)},\ldots,\Om^{(k-2)}\right)$ of complete
contractions of the indicated tensors whose coefficients are independent of
$n$, such that in even dimension $n$, one has 
\begin{equation}\label{trace}
g^{ij}\pa_\rho^{n/2}g_{ij}|_{\rho=0} = \cT^{(n/2)}\left(P,   
\Om^{(1)},\ldots,\Om^{(n/2-2)}\right).
\end{equation}
\end{theorem}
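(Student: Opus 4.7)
The plan is to proceed by induction on $k$, using the Fefferman--Graham recursion derived from $\Ric(g_+)=-ng_+$. Writing the Einstein equation in terms of $g_\rho=h_r$ with $\rho=-r^2/2$ yields a tensorial ODE whose $(k-1)$-st $\rho$-derivative at $\rho=0$ takes the schematic form
$$
\alpha_k(n)\,g^{(k)}_{ij}=\cL^{(k)}_{ij}\bigl(g;g^{(1)},\ldots,g^{(k-1)}\bigr),
$$
where $g^{(k)}_{ij}:=\pa_\rho^k g_\rho|_{\rho=0}$, $\alpha_k(n)$ is a polynomial in $n$ having $(n-2k)$ as a factor, and $\cL^{(k)}_{ij}$ is a polynomial natural tensor built from $g$, the earlier coefficients $g^{(l)}$, and the $\rho$-derivatives of $\Ric(g_\rho)$ expanded through the relevant order. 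The zero of $\alpha_k$ at $n=2k$ is precisely the obstruction to solving the Einstein equation in dimension $n=2k$ at order $\rho^k$.

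The base case $k=1$ is $g^{(1)}_{ij}=-2P_{ij}$, which is of the form \eqref{derivform}. For the inductive step, assume \eqref{derivform} for all $l<k$. Substitute these expressions into $\cL^{(k)}_{ij}$, and use $R_{ij}=(n-2)P_{ij}+(\tr P)g_{ij}$ together with its covariant derivatives to reduce Ricci pieces to $P$'s. The outcome is a linear combination of partial contractions of $P$ and $\Om^{(1)},\ldots,\Om^{(k-2)}$ whose coefficients are rational functions of $n$. Dividing by $\alpha_k(n)$ solves for $g^{(k)}_{ij}$. Since each $\Om^{(l)}$ contributes a simple pole at $n=2l+2$ and the division introduces one additional simple pole at $n=2k$, the resulting coefficients have possible simple poles at $n=2l+2$ for $l\le k-2$ and at $n=2k$. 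Because $g^{(k)}_{ij}$ is a polynomial natural tensor on the range of $n$ where it is defined, the spurious poles at the lower dimensions must cancel. The residual pole at $n=2k$ is isolated by \emph{defining} $\Om^{(k-1)}_{ij}$ (up to a fixed normalization constant) to be the natural tensor whose singular part at $n=2k$ reproduces this residue; the remaining terms then have coefficients independent of $n$, which establishes \eqref{derivform}.

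For \eqref{trace} in even dimension $n=2k$, take the $g^{ij}$-trace of the recursion. The left-hand prefactor $\alpha_k(n)$ vanishes, and consistency (via the trace of the Bianchi identity underlying the $\rho$-differentiated Einstein equation) forces $g^{ij}\cL^{(k)}_{ij}$ to contain the same factor of $(n-2k)$. Cancelling this factor and specializing to $n=2k$ produces a well-defined scalar; applying the inductive hypothesis then expresses $g^{ij}\pa_\rho^{n/2}g_{ij}|_{\rho=0}$ as a complete contraction of $P$ and $\Om^{(1)},\ldots,\Om^{(n/2-2)}$ with $n$-independent coefficients. The hardest part of the argument will be, first, verifying that the apparent lower poles really do cancel in the inductive substitution, and second, identifying the residue at $n=2k$ precisely with the natural definition of $\Om^{(k-1)}_{ij}$; equivalently, one must show that the extended obstruction tensors, defined intrinsically, fit consistently into the recursion at every step with the claimed simple-pole structure at $n=2k+2$ whose residue is a multiple of the ambient obstruction tensor in that dimension.
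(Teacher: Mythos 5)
There is a genuine gap at the heart of your argument: the assertion that, after dividing the Fefferman--Graham recursion by $\alpha_k(n)$ and splitting off the simple pole at $n=2k$, ``the remaining terms then have coefficients independent of $n$.'' Nothing in your argument forces this. A rational function of $n$ with its polar part at $n=2k$ removed need not be constant in $n$, and the lower-order tensors $\Om^{(1)},\ldots,\Om^{(k-2)}$ that you substitute inductively each carry their own rational $n$-dependence, so the coefficients you obtain after division are a priori genuinely rational in $n$ with no mechanism to collapse them to constants. The $n$-independence of the coefficients is precisely the content of the theorem, not a bookkeeping consequence of isolating one pole. There is also a circularity: you propose to \emph{define} $\Om^{(k-1)}_{ij}$ as the tensor carrying the residue at $n=2k$, but the theorem is about the specific tensors already defined as conformal curvature tensors $\Rt_{\nf ij\nf,\nf\cdots\nf}|_{\rho=0,\,t=1}$ (Definition~\ref{extobs}); the residue only determines the polar part of such a tensor, not the tensor itself, so your construction does not identify the same object. (Minor point: the base case should read $g^{(1)}_{ij}=2P_{ij}$, not $-2P_{ij}$, with the convention $\rho=-\tfrac12 r^2$.)

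The paper sidesteps all of this by never solving the Einstein equation and never dividing by anything. It proves a stronger identity valid for an \emph{arbitrary} smooth family $g_\rho$ in \eqref{ambmetric}: setting $\La^{(k)}_{ij}=\Rt_{\nf ij\nf,\nf\cdots\nf}|_{t=1}$, a direct computation of the Christoffel symbols of $\gt$ yields $g''_{ij}=2\La^{(1)}_{ij}+\tfrac12 g^{kl}g'_{ik}g'_{jl}$ and the recursion $\pa_\rho\La^{(k)}_{ij}=\La^{(k+1)}_{ij}+g^{lm}g'_{m(i}\La^{(k)}_{j)l}$; repeated $\rho$-differentiation via the Leibnitz rule then expresses $\pa_\rho^k g_{ij}$ as a partial contraction of $g'$ and $\La^{(1)},\ldots,\La^{(k-1)}$ with manifestly $n$-independent (indeed, purely combinatorial) coefficients, since no dimension-dependent factor ever enters. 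Only at the end does one specialize to the Ricci-flat case and set $\rho=0$, where $g'=2P$ and $\La^{(l)}=\Om^{(l)}$; all the $n$-dependence is thereby packaged inside the $\Om^{(l)}$ themselves. The trace statement \eqref{trace} likewise follows not from a Bianchi-identity cancellation of $(n-2k)$ but from the observation that $\Rt_{\nf\nf}=O(\rho^{n/2-1})$ forces $g^{ij}\La^{(n/2-1)}_{ij}|_{\rho=0}=0$, which kills the one undefined term $2\La^{(n/2-1)}_{ij}$ in the decomposition upon tracing. To repair your proof you would need to supply the missing $n$-independence argument, and it is hard to see how to do so without effectively reproducing the paper's curvature-based recursion.
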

A corollary is the analogous result for the renormalized volume 
coefficients.  
\begin{corollary}\label{vform}
Let $k\geq 1$.  There is a linear combination 
$\cV_k\left(P, \Om^{(1)},\ldots,\Om^{(k-2)}\right)$ 
of complete contractions with respect to $g^{-1}$ of  
the Schouten tensor $P$   
and the $\Om^{(l)}$, $1\leq l\leq k-2$, such that the coefficients of 
$\cV_k$ are independent of $n$, and such that the renormalized volume
coefficients in dimension $n$ are given by 
$$
v_k(g) = \cV_k\left(P, \Om^{(1)},\ldots,\Om^{(k-2)}\right), 
$$
for all $k\geq 1$ if $n$ is odd and for $1\leq k\leq n/2$ if $n$ is     
even.
\end{corollary}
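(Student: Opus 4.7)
The plan is to derive the corollary by a direct algebraic substitution of Theorem~\ref{coefficientform} into the expansion~\eqref{ambexpansion} defining $v_k$. Writing $g_0^{(j)}=\pa_\rho^j g_\rho|_{\rho=0}$ and using the identity $\log\det = \tr\log$, one has
\[
\left(\frac{\det g_\rho}{\det g_0}\right)^{1/2}
= \exp\!\Bigl(\tfrac{1}{2}\, \tr \log(g_0^{-1} g_\rho)\Bigr).
\]
I would expand the right-hand side as a formal power series in $\rho$; this exhibits each $v_k$ as an explicit universal polynomial, whose coefficients depend only on $k$, in the scalar traces $\tr(g^{-1}g_0^{(i_1)}\cdots g^{-1}g_0^{(i_l)})$ with $i_1+\cdots+i_l=k$.

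The decisive structural observation is that whenever one of the indices $i_j$ equals $k$, all the other indices must vanish, so $g_0^{(k)}$ appears in the expression for $v_k$ \emph{only} through its scalar trace $g^{ij}\pa_\rho^k g_{ij}|_0$. This is precisely why $v_k(g)$ is well-defined even at the borderline case $k=n/2$ in even dimension, where only the trace of $g_0^{(n/2)}$ is determined by the ambient construction.

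I would then substitute using Theorem~\ref{coefficientform}: for each $1\le j\le k-1$, replace $g_0^{(j)}$ by $\cG^{(j)}_{ij}(P,\Om^{(1)},\ldots,\Om^{(j-1)})$ via~\eqref{derivform}; and replace the top-order trace $g^{ij}\pa_\rho^k g_{ij}|_0$ by $\cT^{(k)}(P,\Om^{(1)},\ldots,\Om^{(k-2)})$ via~\eqref{trace}. All substitutions use $n$-independent coefficients and involve only $\Om^{(l)}$ with $l\le k-2$, so after multiplying out products, contracting with $g^{-1}$, and collecting, one obtains $v_k$ as a universal linear combination of complete contractions of $P$ and $\Om^{(1)},\ldots,\Om^{(k-2)}$, with coefficients independent of $n$. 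Calling this combination $\cV_k$ yields the corollary.

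The one point requiring a word is that in odd dimensions the identity~\eqref{trace} is not directly asserted by Theorem~\ref{coefficientform}, whose odd-dimensional statement~\eqref{derivform} covers the full tensor $g_0^{(k)}$ and a priori involves $\Om^{(l)}$ up to $l=k-1$. I would reconcile this by observing that $\cT^{(k)}$ and the trace $g^{ij}\cG^{(k)}_{ij}$ both have $n$-independent coefficients and both represent $g^{ij}\pa_\rho^k g_{ij}|_0$ in infinitely many even dimensions, hence must agree as formal expressions in $P$ and the $\Om^{(l)}$; in particular the apparent $\Om^{(k-1)}$ contributions to the trace must cancel. This reconciliation is the only substantive nuance; once it is in hand, the corollary reduces to bookkeeping.
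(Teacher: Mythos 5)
Your overall strategy coincides with the paper's: expand $(\det g_\rho/\det g_0)^{1/2}$ to write $v_k$ as a universal polynomial in traces of products of the $\pa_\rho^j g_{ij}|_{\rho=0}$, observe that the top coefficient $\pa_\rho^k g_{ij}|_{\rho=0}$ enters only through its scalar trace, and then substitute Theorem~\ref{coefficientform}. You have also correctly located the one genuine difficulty: away from the borderline case $k=n/2$, the substitution \eqref{derivform} expresses the trace term as $g^{ij}\cG^{(k)}_{ij}\left(P,\Om^{(1)},\ldots,\Om^{(k-1)}\right)$, which a priori involves $\Om^{(k-1)}$, whereas the corollary allows only $\Om^{(l)}$ with $l\leq k-2$. (Note that this issue arises not only for $n$ odd, as you state, but equally for $n$ even with $k\leq n/2-1$.)

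However, your proposed resolution of that difficulty does not work. You argue that $\cT^{(k)}$ and $g^{ij}\cG^{(k)}_{ij}$ ``both represent $g^{ij}\pa_\rho^k g_{ij}|_{\rho=0}$ in infinitely many even dimensions'' and hence must agree as formal expressions. But \eqref{trace} asserts that $\cT^{(k)}$ represents this trace only in the single dimension $n=2k$ --- and that is precisely the dimension in which $\Om^{(k-1)}$ is not defined and \eqref{derivform} fails at order $k$. Conversely, $g^{ij}\cG^{(k)}_{ij}$ is known to represent the trace only for $n$ odd or $n\geq 2k+2$ even. There is no dimension in which both identities hold, so no comparison between them is available, and one cannot conclude that the $\Om^{(k-1)}$ contributions cancel. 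The correct resolution is structural and is supplied inside the paper's proof of Theorem~\ref{coefficientform}: by \eqref{firstterm} (equivalently, by the weight relation \eqref{homo}, which forces any term containing $\Om^{(k-1)}$ to be a constant multiple of $\Om^{(k-1)}_{ij}$ itself), the only occurrence of $\Om^{(k-1)}$ in $\cG^{(k)}_{ij}$ is the leading term $2\Om^{(k-1)}_{ij}$, and this term has vanishing trace by Proposition~\ref{tracefree}. Hence $g^{ij}\cG^{(k)}_{ij}=g^{ij}\overline{\cG}^{(k)}_{ij}$ genuinely involves only $\Om^{(l)}$ with $l\leq k-2$ (this is \eqref{K}), and the same expression serves as $\cT^{(n/2)}$ at the borderline order. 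Without invoking the trace-freeness of the extended obstruction tensors and the special form of the leading term, the key step of your argument is left unproved.
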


\noindent
For example, \eqref{v3} and \eqref{omega1} give
$$
v_3(g)=\sigma_3(g^{-1}P)-\tfrac13 P^{ij}\Om^{(1)}_{ij}.
$$

The proof of Theorem~\ref{coefficientform} gives a fairly simple, direct
algorithm for the inductive determination of the $\cG^{(k)}_{ij}$ which is
independent of the formal solution of the Einstein equation.  
It is easy to carry this out to exhibit $\cG^{(k)}_{ij}$ for small $k$; we  
give the result for $k\leq 5$.  The more significant algebraic
complexity occurs in the  
expressions for the $\Om^{(k)}_{ij}$ in terms of the curvature of $g$, for  
which solution of  the Einstein equation is required and in which
the dimension enters explicitly.   

The extended obstruction tensors are part of the theory of conformal
curvature tensors developed in \S6 of \cite{FG2}; they are particular
instances of conformal 
curvature tensors.  In particular, each of them has the property shared by
all conformal curvature tensors that its conformal transformation law  
can be written explicitly in terms of other conformal curvature tensors and 
first derivatives of the conformal factor.  Thus Theorem~\ref{atmost2}
follows immediately from Theorem~\ref{coefficientform} and
Corollary~\ref{vform}.  Moreover, this shows that the only way second 
derivative terms in $\om$ can arise in the conformal transformation law of
$v_k(g)$ is from occurrences in $\cV_k$ of the Schouten tensor. 

A closer analysis of the conformal 
transformation law of the $\Om^{(k)}_{ij}$ and of the form of the $\cV_k$
gives  
the following result describing the structure of $v_k(e^{2\om}g)$ as a
second order fully nonlinear operator.
\begin{theorem}\label{structure}
Let $k\geq 1$ and suppose $k\leq n/2$ if $n$ is even.  Then   
$$
e^{2k\om} v_k(\gh)= \sigma_k(g^{-1}\wh{P})+
\sum_{m=0}^{k-2}r_{k,m}(x,\nabla\om,\widehat{P}),
$$
where $r_{k,m}(x,\nabla\om,\widehat{P})$ is a polynomial 
in $(\om_i$, $\widehat{P}_{ij})$ which is homogeneous of  
degree 
$m$ in $\widehat{P}$, of degree $\leq 2k-2m-2$ in $\nabla\om$, and with  
coefficients depending on $g$.  
\end{theorem}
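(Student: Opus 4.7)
The plan is to expand $v_k(\widehat g)$ using Corollary~\ref{vform} and then exploit the fact that the extended obstruction tensors are conformal curvature tensors.

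By Corollary~\ref{vform},
$$
v_k(\widehat g)=\cV_k\!\left(\widehat P,\widehat{\Om}^{(1)},\ldots,\widehat{\Om}^{(k-2)}\right),
$$
a linear combination of complete contractions with respect to $\widehat g^{-1}$ of factors $\widehat P$ and $\widehat{\Om}^{(l)}$. The subsum of contractions involving no $\widehat\Om$ factor must equal $\sigma_k(\widehat g^{-1}\widehat P)$: on locally conformally flat metrics all $\Om^{(l)}$ vanish while $v_k(g)=\sigma_k(g^{-1}P)$, and the pure-$P$ piece of $\cV_k$ is a natural polynomial in $g^{-1}$ and $P$ alone, so this agreement on the dense locally conformally flat class forces the identification in every dimension. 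Collect the remaining contractions, each carrying at least one factor of some $\widehat\Om^{(l)}$, as $\widehat{\cR}_k$.

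Multiplying by $e^{2k\om}$, the identity $\widehat g^{-1}=e^{-2\om}g^{-1}$ together with the homogeneity of $\sigma_k$ in the eigenvalues of the associated endomorphism gives $e^{2k\om}\sigma_k(\widehat g^{-1}\widehat P)=\sigma_k(g^{-1}\widehat P)$. In each term of $e^{2k\om}\widehat{\cR}_k$, replace every $\widehat g^{-1}$ by $e^{-2\om}g^{-1}$ and substitute the conformal transformation of each $\widehat\Om^{(l)}$. By the conformal curvature tensor property recalled in \S6 of \cite{FG2}, this substitution expresses $\widehat\Om^{(l)}$ as a polynomial in $\nabla\om$ alone---no second derivatives of $\om$ appear---whose coefficients are conformal curvature tensors of $g$, and in particular involve no factors of $\widehat P$. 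Consequently every summand of $e^{2k\om}\widehat{\cR}_k$ becomes a polynomial in $(\om_i,\widehat P_{ij})$ with coefficients depending only on $g$.

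The two degree bounds then follow by weight counting. A complete contraction with $m$ factors of $\widehat P$ and $n_l$ factors of $\widehat\Om^{(l)}$ contributes to $v_k$ only if the conformal weights match, giving $m+\sum_l(l+1)n_l=k$. In any term of $\widehat{\cR}_k$ we have $\sum_l n_l\geq 1$ with all $l\geq 1$, forcing $m\leq k-2$ and yielding the claimed homogeneity in $\widehat P$. The bound on $\nabla\om$-degree reduces to showing that the conformal transformation of $\widehat\Om^{(l)}$ contributes at most $2l$ factors of $\nabla\om$ per occurrence; granting this,
$$
\sum_l 2l\,n_l \;=\; 2k-2m-2\sum_l n_l \;\leq\; 2k-2m-2,
$$
matching the theorem exactly, with the maximum attained when exactly one $\widehat\Om^{(l)}$ factor is present.

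The principal obstacle is this sharp $2l$ bound on the $\nabla\om$-degree of the transformation law for $\widehat\Om^{(l)}$. It must be extracted from the explicit structure of $\Om^{(l)}$ produced by the inductive algorithm of Theorem~\ref{coefficientform}---informally, $\Om^{(l)}$ has top-order piece consisting of $2l$ covariant derivatives of the Weyl tensor, and each such derivative contributes at most one factor of $\nabla\om$ under the chain-rule expansion---together with the cancellation of every $\nabla^2\om$-term guaranteed by the conformal curvature tensor property.
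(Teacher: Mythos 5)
Your overall architecture coincides with the paper's: decompose $\cV_k$ by homogeneity in $P$, identify the pure-$P$ piece with $\sigma_k(g^{-1}\wh{P})$ via the locally conformally flat case, substitute the conformal transformation laws of the $\Om^{(l)}$, and close the degree count with the weight relation $m+\sum_l(l+1)n_l=k$. All of that is correct (one small caveat: locally conformally flat metrics are not ``dense''; the identification $\cV_{k,k}(P)=\sigma_k(g^{-1}P)$ is forced because any symmetric 2-tensor at a point is the Schouten tensor of some conformally flat metric, so the two polynomials in the components of $P$ agree identically).

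The genuine gap is exactly the step you flag as ``the principal obstacle'': the bound that $e^{2l\om}\wh{\Om}^{(l)}$ has degree at most $2l$ in $\nabla\om$. You only offer an informal heuristic (a purported top-order piece consisting of $2l$ covariant derivatives of the Weyl tensor, each contributing one $\nabla\om$, plus an unexplained cancellation of all $\nabla^2\om$ terms), and this is not a proof; controlling the transformation of every term in the natural-tensor expression for $\Om^{(l)}$ this way would be unwieldy and is never carried out. The paper closes this step without touching the explicit formulae: it uses the transformation law of Proposition~\ref{obslaw}, assigns weights $\|0\|=0$, $\|i\|=1$, $\|\infty\|=2$, observes that $p^A{}_I$ vanishes for $\|A\|>\|I\|$ and is homogeneous of degree $\|I\|-\|A\|$ in $\nabla\om$, so a term indexed by $ABCDF_1\cdots F_{l-1}$ has $\nabla\om$-degree at most $2l+4-\|ABCDF_1\cdots F_{l-1}\|$, and then invokes the vanishing of conformal curvature tensors with at most three nonzero indices (Proposition 6.1 of \cite{FG2}) to conclude $\|ABCDF_1\cdots F_{l-1}\|\geq 4$, hence degree $\leq 2l$. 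Without some such argument your proof of the $\nabla\om$-degree bound $2k-2m-2$ is incomplete; the homogeneity bound in $\wh{P}$, by contrast, is fully established by what you wrote.
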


\noindent
Here $\wh{P}$ is the conformally
transformed Schouten tensor given by \eqref{Ptransform}.

Theorem~\ref{structure} shows that $\sigma_k(g^{-1}\wh{P})$ can be viewed
as the leading term 
in $v_k(\gh)$ from two points of view (at least for $k\leq n$ so that
$\sigma_k\neq 0$).  First, it has the highest    
homogeneity degree in $\widehat{P}$, which contains all the second 
derivative terms.  Second, it contains all the terms with the highest   
total number of derivatives of $\om$.  By this we mean that we expand 
$e^{2k\om}v_k(\gh)$ as a polynomial in $\om_i$, $\om_{ij}$ and add up 
the total number derivatives on $\om$ in each monomial.  For example, in  
\eqref{Ptransform}, each of the terms $\om_{ij}$, $\om_i\om_j$ and
$\om_k\om^k$ has a total of 2 derivatives of $\om$.  Thus 
$\sigma_k(g^{-1}\wh{P})$ contains terms with $2k$ derivatives of $\om$. 
Theorem~\ref{structure} implies that each term in each 
$r_{k,m}(x,\nabla\om,\widehat{P})$ involves at most $2k-2$ derivatives of
$\om$.  It is tempting to speculate that requiring these two properties
gives a reasonable definition of a ``principal part'' of a second order
fully nonlinear operator depending polynomially on the derivatives.  

The second proof of Theorem~\ref{atmost2} proceeds via a study of  
the linearization of $v_k(e^{2\om}g)$ as a function of $\om$, i.e. of the
linearized conformal transformation law of $v_k(g)$.  The main 
ingredient is a formula for the conformal variation of the 1-parameter
family $h_r$ of metrics on $M$ which arise when a given asymptotically 
hyperbolic metric $g_+$ is written in the form 
\eqref{gplus}.  If one chooses a conformally related boundary metric
$\gh=e^{2\om}g$, then up to a 
diffeomorphism of $M\times [0,\ep)$, $g_+$ can be written in the form
\eqref{gplus} with a different 1-parameter family 
$\hh_r$ satisfying $\hh_0=\gh$.  
It is possible to solve explicitly for the infinitesimal diffeomorphism  
in terms of $\om$ and then for the infinitesimal conformal variation of
$h_r$.  Letting $\delta$ denote infinitesimal conformal variation, the
result 
when written in terms of $g_\rho=h_r$ defined as above is the following:   
(it is convenient to use also the notation $g_{ij}(\rho)$ or simply
$g_{ij}$ for $g_\rho$)  
\begin{equation}\label{newlaw}
(\delta g)_{ij} = 2\om( 1 -\rho\pa_\rho)g_{ij} +2 \nabla_{(i}Y_{j)}
\end{equation}
with
\begin{equation}\label{Yformula}
Y^i(\rho)=-\int_0^\rho g^{ij}(u)\,du\; \pa_j\om, \qquad 
Y_j(\rho)=g_{ij}(\rho)Y^i(\rho).
\end{equation}
Here $\nabla_i$ denotes the covariant derivative with respect to  
$g_\rho$ with $\rho$ fixed.  An easy consequence of this is a
formula for the infinitesimal conformal variation of the $v_k(g)$.  Set  
\begin{equation}\label{vdef}
v(\rho)=\left(\frac{\det g_\rho}{\det g_0}\right)^{1/2}.  
\end{equation}
\begin{theorem}\label{L}
Let $k\geq 1$ and $k\leq n/2$ if $n$ is even.  The infinitesimal 
conformal variation of $v_k$ is given by:  
\begin{equation}\label{dvkform}
\delta v_k = -2k\om v_k +\nabla_i\left(L^{ij}_{(k)}\nabla_j \om\right),
\end{equation}
where
\begin{equation}\label{Lformula}
L^{ij}_{(k)}=
-\frac{1}{k!}\pa_\rho^k\left( v(\rho)\int_0^\rho g^{ij}(u)\,du
\right)\Big{|}_{\rho =0}
=-\sum_{l=1}^k\frac{1}{l!}v_{k-l}\,\pa_\rho^{l-1}g^{ij}|_{\rho =0}.  
\end{equation}
In \eqref{dvkform}, $\nabla_i$ denotes the covariant derivative with
respect to the initial metric $g=g_0$.  
\end{theorem}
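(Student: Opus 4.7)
\textbf{Proof proposal for Theorem~\ref{L}.} The plan is to apply the variational formula \eqref{newlaw}--\eqref{Yformula} to the definition \eqref{vdef} of $v(\rho)$, massage the resulting $\rho$-dependent identity into divergence form, and then extract the coefficient of $\rho^k$.

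First I would compute $\delta v$ as a function of $\rho$. Since $2\log v = \log\det g_\rho - \log\det g_0$, differentiation gives
$$
\delta v = \tfrac{v}{2}\left(g^{ij}(\rho)(\delta g)_{ij}(\rho) - g^{ij}(0)(\delta g)_{ij}(0)\right).
$$
Substituting \eqref{newlaw} and using $g^{ij}\pa_\rho g_{ij} = 2\pa_\rho\log v$, the trace at general $\rho$ becomes $2n\om - 4\om\rho\pa_\rho\log v + 2\nabla^{(\rho)}_iY^i$, where $\nabla^{(\rho)}$ is the $g_\rho$-covariant derivative. At $\rho = 0$ one has $Y(0)=0$ and the trace reduces to $2n\om$, so the $2n\om$ terms cancel and
$$
\delta v = -2\om\rho\,\pa_\rho v + v\nabla^{(\rho)}_iY^i.
$$

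The next, essential, step is to convert the $g_\rho$-divergence into a $g_0$-divergence via the volume-form identity
$$
v(\rho)\,\nabla^{(\rho)}_iY^i = \nabla_i\bigl(v(\rho)Y^i\bigr),
$$
where on the right $\nabla_i$ denotes the covariant derivative with respect to $g_0$; this holds because $v(\rho) = \sqrt{\det g_\rho / \det g_0}$ is exactly the ratio of volume densities. Inserting \eqref{Yformula}, the right side becomes $\nabla_i\bigl(M^{ij}(\rho)\pa_j\om\bigr)$ with
$$
M^{ij}(\rho) = -v(\rho)\int_0^\rho g^{ij}(u)\,du.
$$
The crucial point is that $\nabla_i$ no longer depends on $\rho$, so it commutes with $\pa_\rho^k$.

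To finish, apply $\frac{1}{k!}\pa_\rho^k\big|_{\rho=0}$ to $\delta v = -2\om\rho\,\pa_\rho v + \nabla_i(M^{ij}(\rho)\pa_j\om)$. The first term contributes $-2k\om v_k$ using $\pa_\rho^k(\rho\,\pa_\rho v)\big|_{\rho=0} = k\cdot k!\,v_k$, and the second yields $\nabla_i\bigl(L^{ij}_{(k)}\pa_j\om\bigr)$ with $L^{ij}_{(k)} = \frac{1}{k!}\pa_\rho^k M^{ij}\big|_{\rho=0}$, which is the first formula in \eqref{Lformula}. The second expression follows from Leibniz expansion of $v(\rho)\int_0^\rho g^{ij}(u)\,du$, using $\pa_\rho^l\int_0^\rho g^{ij}(u)\,du\big|_{\rho=0}=\pa_\rho^{l-1}g^{ij}|_{\rho=0}$ for $l\geq 1$ and $\pa_\rho^{k-l}v|_{\rho=0}=(k-l)!\,v_{k-l}$, together with $\binom{k}{l}(k-l)!/k! = 1/l!$.

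The only real subtlety is the volume-form identity converting the $g_\rho$-divergence to a $g_0$-divergence; without it the $\pa_\rho^k$ expansion would entangle $\rho$-dependent Christoffel symbols with the integral defining $Y^i$ and would not produce a clean divergence in $g_0$. Everything else is bookkeeping with power series in $\rho$.
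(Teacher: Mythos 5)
Your proposal is correct and follows essentially the same route as the paper: the paper first isolates your $\rho$-dependent identity $\delta v = -2\om\rho\,\pa_\rho v + \nabla_i^{(0)}(vY^i)$ as Proposition~\ref{deltav} (proved exactly by tracing \eqref{newlaw} and using the volume-density identity $v\,\nabla_i^{(\rho)}Y^i=\nabla_i^{(0)}(vY^i)$), and then extracts Taylor coefficients and applies the Leibniz rule just as you do. The computations, including the key conversion to a $g_0$-divergence and the combinatorial identity $\binom{k}{l}(k-l)!/k!=1/l!$, all check out.
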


The infinitesimal transformation laws \eqref{newlaw} and \eqref{dvkform}
clearly involve 
derivatives of $\om$ of order at most 2.  The second proof of
Theorem~\ref{atmost2} proceeds by arguing that if the infinitesimal
conformal transformation law of a natural tensor involves at most $m$  
derivatives of $\om$ for some $m\geq 0$, then the same is true of the full
transformation law.  This is the content of Proposition~\ref{linfull}. 

The Chang-Fang variational characterization of the equations $v_k(g)=c$ as
Euler-Lagrange equations if $n\neq 2k$ is an easy consequence of
\eqref{dvkform}.  
The main point is that the second term on the right hand side of 
\eqref{dvkform} is a divergence, which 
integrates to zero.  Thus the only contribution to the Euler-Lagrange
equation is the scaling contribution given by the first term, so that the 
Euler-Lagrange equation is $v_k(g)=c$.  
The proof of Chang-Fang is also based on using the diffeomorphism
invariance of $g_+$ under conformal change of $g$ to generate a divergence
term; these amount to different versions of the same proof.  But by working  
directly with metrics in the normal form \eqref{gplus}, it is possible  
to give explicit formulae for the divergence terms, among other things
making it clear that 
these terms depend on no more than second derivatives of $\om$.  The 
approach to the linearization formulae used here is the same as in 
\cite{ISTY}, where the formulae \eqref{newlaw} and  \eqref{dvkform} already 
appear.        

Formula \eqref{dvkform} can be interpreted as identifying the 
linearization at $\om=0$ of     
the second order fully nonlinear operator $v_k(e^{2\om}g)$  
with $g$ fixed.  In particular, the linearization is exhibited in
divergence form modulo the zeroth order scaling term, and its principal
part is $L^{ij}_{(k)}\nabla^2_{ij}\om$.  This may be 
useful in determining ellipticity of $v_k(e^{2\om}g)=c$.  However,  
although all of these linearization formulae are explicit, they are written
in terms of the coefficients in the 
ambient metric and renormalized volume expansions, and therefore are  
difficult to understand directly in terms of geometry of $g$.
Additionally, they involve more and more derivatives of $g$ as $k$
increases. 

The results obtained in this paper support and extend the suggestion of
Chang-Fang  
that the $v_k(g)$ are worthy of further study.  However, significant
algebraic 
complications remain and the geometric content of the
equations $v_k(e^{2\om}g)=c$ is unclear, particularly for large $k$.
Perhaps it would be reasonable to try to extend directly the analytic
theory of the    
$\sigma_k$ equations to elliptic fully nonlinear equations    
allowing ``lower order terms'' with structure as in
Theorem~\ref{structure}.  

\section{Extended Obstruction Tensors}\label{eot}
We begin this section by recalling the Poincar\'e metric expansion 
and the definition of the renormalized volume    
coefficients.  These are then reformulated in terms of the expansion of the  
ambient metric.  After reviewing the theory of
conformal curvature tensors from \cite{FG2}, we define the extended
obstruction tensors as certain specific conformal curvature tensors.   
We establish the basic properties of the extended obstruction tensors.
Then we prove Theorems~\ref{coefficientform} and \ref{structure}.  The
section is concluded by giving some explicit formulae for small $k$.       

First recall the Poincar\'e metric expansion and the definition of the
renormalized volume  
coefficients $v_k$.  References for this material are  
\cite{G}, \cite{GH}, and \cite{FG2}.  Let $g$ be a metric of signature   
$(p,q)$ on a manifold $M$ of dimension $n\geq 3$.  There are versions of
most of the statements in dimension 2, but this case is anomalous and 
our main interest is in higher 
dimensions, so for simplicity we assume $n\geq 3$.   
If $n$ is odd, there is a smooth 1-parameter family $h_r$, $0\leq r< 1$, of 
metrics on $M$ such that $h_0 =g$ and such  
that the metric $g_+ = r^{-2}\left(dr^2 + h_r\right)$ of signature
$(p+1,q)$ on $M\times (0,1)$ 
satisfies that $\Ric(g_+)+ng_+$ vanishes to infinite order at $r=0$.  The
Taylor expansion in $r$ of $h_r$ at $r=0$ can be chosen to be even in $r$,
in which case it is uniquely determined.  

For $n\geq 4$ even, the corresponding statement holds only to a finite 
order.  
We say that a tensor is $O(r^m)$ if all of  
its components relative to a frame smooth up to $r=0$ are $O(r^m)$.  
We use lower case Latin indices to label objects on $M$.  
When $n$ is even, $h_r$ can be chosen so that its Taylor expansion is even
in $r$ and such that 
$$
\Ric(g_+)+ng_+=O(r^{n-2}),\qquad
h^{ij}\left(\Ric(g_+)+ng_+\right)_{ij}=O(r^n).
$$  
In the second equation, $\left(\Ric(g_+)+ng_+\right)_{ij}$ denotes the
component with both indices in the $M$ factor.  In calculating the trace,
$h^{ij}$ can be taken to be either $(h_0)^{ij}=g^{ij}$ or $(h_r)^{ij}$.   
These conditions uniquely 
determine $h_r \mod O(r^{n})$ and also $\operatorname{tr}_{g}h_r$ $\mod
O(r^{n+2})$.  For $n\geq 4$ even, there is a conformally invariant
trace-free, divergence-free natural tensor $\cO_{ij}$, the ambient
obstruction tensor, which obstructs the existence of 
a formal power series solution for $g_+$ to the next order.  $\cO_{ij}$
depends on derivatives of $g$ of order up to $n$.  When $n=4$, $\cO_{ij}$
is the classical Bach tensor.  

The passage from $g$ to $g_+$ is conformally invariant in the sense that if 
$\gh=e^{2\om}g$ with $\om\in C^\infty(M)$, and $\widehat{h}_r$ denotes the
expansion  
determined by $\gh$, then the metrics $g_+$ and 
$\gh_+=r^{-2}\left(dr^2 + \widehat{h}_r\right)$ are isometric 
by a diffeomorphism restricting to the identity on $M\times \{0\}$, 
to infinite order
if $n$ is odd, and up to a term which is $O(r^{n-2})$ and the trace of   
whose tangential component is $O(r^n)$ if $n$ is even.  

There are two special families of conformal structures in even dimensions  
for which the obstruction tensor vanishes and for which it is possible to 
uniquely determine the expansion of the Poincar\'e metric to infinite order
in a conformally 
invariant way.  These are the locally conformally flat structures and the
conformal classes containing an Einstein metric.  In these cases, the
normalized expansion can be written explicitly and terminates at order
four: for all $n\geq 3$, one has
\begin{equation}\label{confflat}
(h_r)_{ij}=g_{ij}-P_{ij}r^2+\tfrac14 P_{ik}P_j{}^kr^4
\end{equation}
if $g$ is Einstein or locally conformally flat.   
See \cite{SS} and \S 7 of \cite{FG2}.    

The volume form of $g_+$ is
$$
dv_{g_+}=r^{-n-1}dv_{h_r}dr
=r^{-n-1}\left(\frac{\det h_r}{\det h_0}\right)^{1/2}dv_{g}dr.
$$
The renormalized volume coefficients are defined by the Taylor expansion:  
\begin{equation}\label{expansion}
\left(\frac{\det h_r}{\det h_0}\right)^{1/2}
\sim 1+\sum_{k=1}^\infty (-2)^{-k}v_kr^{2k}.  
\end{equation}
Thus $v_k$ is uniquely determined by $g=h_0$ for
all $k\geq 1$ if $n$ is odd, and for $1\leq k\leq n/2$ if $n$ is even.   
As will be discussed in  
more detail below, $v_1$ and $v_2$ are given by   
$$
v_1=J,\qquad 
v_2=\tfrac12\left(J^2-P_{ij}P^{ij}\right),
$$ 
where $J=R/2(n-1)=P_i{}^i$.  
If $g$ is Einstein or locally conformally flat, then 
the $v_k$ are determined by $g$ for all $k$ for both $n$ even and 
odd.  Proposition 1 of \cite{GJ} uses \eqref{confflat} to 
show that $v_k(g)=\sigma_k(g^{-1}P)$ for all  
$k\geq 1$ if $g$ is locally conformally flat.  The same argument shows that
this also holds if $g$ is Einstein.  In particular, $v_k(g)$ is constant for
Einstein metrics.  

The evenness of the Poincar\'e metric in $r$ suggests to introduce $r^2$ as
a new variable.  Set  
$\rho = -\frac12 r^2$ and $g_\rho = h_r$. 
Then the volume expansion \eqref{expansion} becomes \eqref{ambexpansion}.   

The 1-parameter family $g_\rho$ can be characterized directly in terms of
the expansion of the   
ambient metric $\gt$ associated to $g$, which is equivalent to 
the expansion of the Poincar\'e metric $g_+$.
Define $\gt$, a metric of signature $(p+1,q+1)$ on    
$\R_+\times M\times (-1/2,0]\ni (t,x,\rho)$, by    
\begin{equation}\label{ambmetric}
\gt = 2t\,dt\,d\rho +2\rho\,dt^2 +t^2g_\rho.
\end{equation}
The condition
$\Ric(g_+)+ng_+=0$ is equivalent to $\Ric(\gt)=0$.  Thus the expansion of
$g_\rho$ can be thought of as arising from formally solving $\Ric(\gt)=0$
to the appropriate order rather than $\Ric(g_+)+ng_+=0$.  An advantage of  
considering $\gt$ is that $\gt$ is smooth near $\rho
=0$, whereas $g_+$ is singular at $r=0$.  As we will see, this makes it
easier to pass objects constructed out of $\gt$ back to $M$.    

For each $k\geq 1$ satisfying also $k<n/2$ if $n$ is even, the Taylor
coefficient $\pa_\rho^kg_\rho|_{\rho =0}$ is given by a polynomial natural 
tensor depending on the initial metric $g$.  For $n$ even, the trace   
$g^{ij}\left(\pa_\rho^{n/2}g_{ij}|_{\rho =0}\right)$ at order $n/2$ is also
a natural scalar invariant of $g$.  
It is possible to directly compute the beginning coefficients.  For  
example, 
letting $'=\pa_\rho$ and suppressing the ${}_\rho$ in $g_\rho$, (3.6) and
(3.18) of \cite{FG2} show that one has at $\rho =0$:   
\begin{equation}\label{2derivs}
g_{ij}'=2P_{ij},\qquad g_{ij}'' = \frac{2}{4-n}B_{ij} +2P_i{}^kP_{kj},  
\end{equation}
where 
$$
B_{ij} = P_{ij},_k{}^k-P_{ik},_j{}^k- P^{kl}W_{kijl}
$$
is the Bach tensor.  Here $W_{ijkl}$ denotes the Weyl tensor.  

The last part of \S 6 of \cite{FG2} considers a family of 
trace-free symmetric natural 2-tensors depending on a metric $g$.  
Here we call these the extended obstruction tensors.   
They have the feature that their    
transformation laws under conformal change are explicit and relatively
simple.  The first extended obstruction tensor is
$(4-n)^{-1}B_{ij}$, which we denote by $\Om^{(1)}_{ij}$.  Its   
well-known transformation law under conformal change $\gh=e^{2\om} g$ is:   
\begin{equation}\label{bachlaw}
e^{2\om}\wh{\Om}^{(1)}_{ij} = 
\Om^{(1)}_{ij}-2\om^kC_{(ij)k} +\om^k\om^l W_{kijl}, 
\end{equation}
where $C_{ijk}=P_{ij,k}-P_{ik,j}$ is the Cotton tensor.  Equation 
\eqref{2derivs} shows that $g_{ij}''|_{\rho=0}$ can be expressed in terms
of $\Om^{(1)}_{ij}$ by:  
\begin{equation}\label{firstone}
\tfrac12 g_{ij}''|_{\rho =0} = \Om^{(1)}_{ij} + P_i{}^kP_{kj}. 
\end{equation}
The definition and basic properties of the extended obstruction tensors    
are part of the theory of conformal curvature tensors developed       
in \S 6 of \cite{FG2}.  We summarize the relevant considerations and refer
to \cite{FG2} for details.   

Consider the curvature tensor and its covariant derivatives for
an ambient metric \eqref{ambmetric}.  We denote its curvature tensor 
by $\Rt$, with components $\Rt_{IJKL}$.  Here capital Latin indices are
used for objects on $\R_+\times M\times \R\ni (t,x,\rho)$, and we use '0'
for the $\R_+$ factor ($t$ component), lower case Latin indices for the $M$
factor, and '$\infty$' for the $\R$ factor ($\rho$ component).  For $r\geq
0$, the $r$-th covariant derivative $\widetilde{\nabla}^r\Rt$ of the 
curvature tensor of $\gt$ will be denoted   
$\Rt^{(r)}$, with components $\Rt^{(r)}_{IJKL,M_1\cdots M_r}$.
Sometimes the superscript ${}^{(r)}$ is omitted when the list of
indices makes clear the value of $r$.  

The conformal curvature tensors are tensors on $M$ obtained from
from the covariant derivatives of curvature of
$\gt$ as follows.  Choose an order $r\geq 0$ of covariant differentiation.  
Divide the set of symbols $IJKLM_1\cdots M_r$ into three disjoint
subsets labeled $\cS_0$, $\cS_M$ and $\cS_\nf$.  Set the indices in $\cS_0$
equal to $0$, those in $\cS_\nf$ equal to $\nf$, and let those in $\cS_M$
correspond to $M$ in the decomposition $\R_+ \times M\times \R$.  
Evaluate the resulting component $\Rt_{IJKL,M_1\cdots M_r}$ at $\rho = 0$
and $t=1$.  This defines a tensor on $M$, sometimes denoted by
$\Rt^{(r)}_{\cS_0,\cS_M,\cS_\nf}$, whose rank is the cardinality of the set
$\cS_M$.  In local coordinates, the indices in $\cS_M$ vary between $1$ and
$n$.  

The simplest case is $r=0$.  The curvature tensor $\Rt$ of $\gt$ satisfies 
$\Rt_{IJK0}=0$, so we must choose $\cS_0 = \emptyset$ in order 
to get a nonzero component.  Up to reordering the indices, there are only
three possible nonzero choices (see (6.2) of \cite{FG2}):    
\begin{equation}\label{curv0}
\Rt_{ijkl}|_{\rho = 0, t=1}=W_{ijkl},\quad
\Rt_{\nf jkl}|_{\rho = 0,\,t=1} = C_{jkl}\quad
\Rt_{\nf ij\nf}|_{\rho =0,\,t=1}=\frac{B_{ij}}{4-n}.   
\end{equation}
Thus the conformal  
curvature tensors which arise for $r=0$ are precisely the Weyl, Cotton, and
Bach tensors of $g$, except that when $n=4$, the Bach tensor arises as the
obstruction tensor rather than as a conformal curvature tensor.  

Since $g_\rho$ is uniquely determined by $g_0=g$ to infinite order for $n$
odd, it follows that for $n$ odd the conformal curvature tensors     
$\Rt^{(r)}_{\cS_0,\cS_M,\cS_\nf}$ for all choices of $r$ 
and $\cS_0,\cS_M,\cS_\nf$ are defined and are polynomial natural  
tensors.  However, when $n$ is even, one must restrict the orders of
differentiation to avoid the indeterminacy of $g_\rho$ at order $n/2$.    
For $n$ even, the tensor $\Rt^{(r)}_{\cS_0,\cS_M,\cS_\nf}$ depends only on 
$g$ and is a natural tensor so long as $s_M + 2s_\infty \leq n+1$, where 
$s_M$, $s_\infty$ are the cardinalites of $\cS_M$, $\cS_\nf$, resp.  
If $g$ is Einstein or locally conformally flat, then also for $n$ even the
conformal curvature tensors are defined for all choices of $r$ 
and $\cS_0,\cS_M,\cS_\nf$.  As will be seen below, they all 
vanish in the locally conformally flat case.   

Because $\gt$ changes by a diffeomorphism when $g$ is changed conformally, 
the covariant derivatives $\widetilde{\nabla}^r \Rt$ of
ambient curvature transform tensorially 
under conformal change of $g$.  This leads to an explicit  
identification of the transformation laws of the  
conformal curvature tensors under conformal change.  The following  
is Proposition 6.5 of \cite{FG2}.      
\begin{proposition}\label{ctransform}
Let $g$ and $\wh{g}=e^{2\om}g$ be conformally related metrics on $M$.  
Let $IJKLM_1\cdots M_r$ be a list of indices, $s_0$ of which are $0$,
$s_M$ of which correspond to $M$, and $s_\nf$ of which are $\nf$.  
If $n$ is even,
assume that $s_M + 2s_\nf \leq n+1$.  Then the conformal curvature tensors
satisfy the conformal transformation law:
\begin{equation}\label{transformula}
e^{2(s_\nf -1)\om}  \wh{\Rt}_{IJKL,M_1\cdots M_r}|_{\wh{\rho} = 0,\,\wh{t}=1}
= \Rt_{ABCD,F_1\cdots F_r}|_{\rho = 0,\,t=1}p^A{}_I
\cdots p^{F_r}{}_{M_r},
\end{equation}
where $p^A{}_I$ is the matrix
\begin{equation}\label{pmatrix}
p^A{}_I = 
\left(
\begin{matrix}
1&\om_i&-\frac12 \om_k\om^k\\
0&\delta^a{}_i&-\om^a\\
0&0&1
\end{matrix}
\right).
\end{equation}
\end{proposition}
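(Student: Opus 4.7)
My plan is to exploit the conformal covariance of the ambient metric.  The first ingredient is that if $\gh=e^{2\om}g$, there is a diffeomorphism $\Phi$ (defined to the order permitted by the even-dimensional hypothesis $s_M+2s_\nf\leq n+1$) from the $\gh$-ambient space to the $g$-ambient space with $\Phi^*\gt=\wh{\gt}$, and hence $\Phi^*\Rt^{(r)}=\wh{\Rt}^{(r)}$ for every covariant derivative of curvature.  The second ingredient is the homogeneity of $\gt$ under the dilation $\delta_s(t,x,\rho)=(st,x,\rho)$: since $\delta_s^*\gt=s^2\gt$, the components of $\Rt^{(r)}$ satisfy
\[
\Rt^{(r)}_{I_1\cdots I_{r+4}}(st,x,\rho)=s^{\,2-s_0(I)}\,\Rt^{(r)}_{I_1\cdots I_{r+4}}(t,x,\rho),
\]
where $s_0(I)$ is the number of indices $I_j$ equal to $0$.

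The main calculation is to write down the 1-jet of $\Phi$ along $\wh{\rho}=0$ at the point $\wh{t}=1$.  On the zero locus, the identification of metric bundles under the conformal rescaling forces $\Phi(\wh{t},\wh{x},0)=(\wh{t}\,e^{\om},\wh{x},0)$.  The extension off the boundary is fixed by the normal form: expanding
\[
\Phi(\wh{t},\wh{x},\wh{\rho})=\bigl(\wh{t}\,e^\om+\wh{\rho}\,t_1,\;\wh{x}+\wh{\rho}\,x_1,\;\wh{\rho}\,\rho_1\bigr)+O(\wh{\rho}^{\,2})
\]
and equating the $d\wh{t}\,d\wh{\rho}$, $d\wh{x}^i\,d\wh{\rho}$ and $(d\wh{\rho})^2$ coefficients of $\Phi^*\gt$ and $\wh{\gt}$ at $\wh{\rho}=0$ forces $\rho_1=e^{-2\om}$, $x_1^a=-e^{-2\om}\om^a$ and $t_1|_{\wh{t}=1}=-\tfrac12\,e^{-\om}\om_k\om^k$.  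Comparing with \eqref{pmatrix} one reads off the compact identity
\[
(d\Phi)^A{}_I\;=\;e^{([A=0]\,-\,2[I=\nf])\,\om}\,p^A{}_I
\]
at the evaluation point, where $[\,\cdot\,]$ denotes the Iverson bracket.

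To conclude, insert this into
\[
\wh{\Rt}^{(r)}_{I_1\cdots I_{r+4}}(1,x,0)=\Rt^{(r)}_{A_1\cdots A_{r+4}}(e^\om,x,0)\,\prod_j (d\Phi)^{A_j}{}_{I_j}\big|_{(1,x,0)}
\]
and apply the scaling formula.  The $s_0(A)$ contributions from $(d\Phi)$ cancel those coming from dilation:
\[
(2-s_0(A))+(s_0(A)-2s_\nf(I))=2-2s_\nf(I),
\]
so the joint exponent is independent of the summation pattern $A$ and one obtains the overall factor $e^{-2(s_\nf-1)\om}$ on the right-hand side, which is \eqref{transformula} after rearrangement.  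The even-dimensional hypothesis $s_M+2s_\nf\leq n+1$ is precisely what guarantees that all the components in sight depend only on the determined part of $g_\rho$, and so represent bona fide natural tensors of $g$.  The principal obstacle is the explicit extraction of the 1-jet of $\Phi$ sketched in the middle paragraph, which requires the normal-form bookkeeping for $\Phi^*\gt=\wh{\gt}$ to first order in $\wh\rho$; once this is in hand, the remainder of the argument is a weight-counting exercise.
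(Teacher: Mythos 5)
Your argument is correct and is essentially the proof of Proposition 6.5 of \cite{FG2}, which the paper cites rather than reproves: diffeomorphism covariance of the ambient metric, homogeneity under the dilations $\delta_s$, and the explicit $1$-jet along $\rho=0$ of the normal-form-preserving diffeomorphism combine exactly as you describe, and your computed values $\rho_1=e^{-2\om}$, $x_1^a=-e^{-2\om}\om^a$, $t_1=-\tfrac12 e^{-\om}\om_k\om^k$ reproduce \eqref{pmatrix} via your Iverson-bracket identity $(d\Phi)^A{}_I=e^{([A=0]-2[I=\nf])\om}p^A{}_I$. The weight count $(2-s_0(A))+(s_0(A)-2s_\nf)=2-2s_\nf$ then yields the prefactor in \eqref{transformula}, and the hypothesis $s_M+2s_\nf\leq n+1$ plays exactly the role you assign it, so nothing essential is missing.
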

\noindent
Here the conformal curvature tensor $\Rt_{IJKL,M_1\cdots M_r}|_{\rho =  
0,\,t=1}$ evaluated for the metric $\gh$ is denoted 
$\wh{\Rt}_{IJKL,M_1\cdots M_r}|_{\wh{\rho} = 0,\,\wh{t}=1}$.    
The variables $\wh{\rho}$ and $\wh{t}$ denote the    
coordinates on $\R_+\times M\times \R$, thought of as a separate copy from 
the space for the unhatted metric.  In \eqref{pmatrix}, indices on 
$\om_i$ are raised using the initial metric $g$.  

In expanding the 
right hand side of \eqref{transformula}, the leading term arises by
replacing each $p$ by $\delta$, giving
$\Rt_{IJKL,M_1\cdots M_r}|_{\rho =  0,\,t=1}$.  
Because of the upper-triangular form of the matrix $p^A{}_I$, the 
other 
terms on the right hand side all involve ``earlier'' conformal curvature
tensors in the sense that each '$i$' can be replaced only by $0$ and each 
$\nf$ only by
an '$i$' or a $0$.  It is clear that the conformal transformation law of 
a conformal 
curvature tensor involves only other conformal curvature tensors and first
derivatives of $\om$.  In case $r=0$, using \eqref{curv0} and the fact that
$\Rt_{IJK0}=0$, one sees that
\eqref{transformula} reproduces the  
conformal invariance of the Weyl tensor and the usual conformal
transformation laws of the Cotton and Bach tensors.  Equation
\eqref{transformula} can be interpreted as asserting that 
$\widetilde{\nabla}^r\Rt\,|_{\rho =  0}$ defines a section of the     
\mbox{$(r+4)$-th} tensor power of the cotractor bundle of the conformal   
manifold  $(M,[g])$ with a particular conformal weight.    

It follows directly from the definition that the conformal curvature
tensors all vanish if $g$ is flat.  Thus a consequence of 
\eqref{transformula} is that also they all vanish if $g$ is locally
conformally flat.  
By the infinite order invariance of
the ambient metric for $n$ even in the locally conformally flat case, this
is true for all conformal curvature tensors in both even and odd  
dimensions.   

We now define the extended obstruction tensors.
\begin{definition}\label{extobs}
Let $k\geq 1$.  Suppose that $n$ is odd or $n$ is even and $n>2(k+1)$.  
Define the $k$-th extended obstruction tensor 
$\Om^{(k)}_{ij}$ to be the conformal curvature tensor:
$$
\Om^{(k)}_{ij} = \Rt_{\infty  
  ij\infty,\,\underbrace{\scriptstyle{\infty\ldots\infty}}_{k-1}}|_{\rho 
  =0,\,t=1}.  
$$
\end{definition} 

According to the above discussion, $\Om^{(k)}_{ij}$ is a polynomial
natural tensor of the initial metric $g$.  
For $k=1$, \eqref{curv0} shows that $\Om^{(1)}_{ij}$ is given by
\eqref{omega1}.  It is clear that $\Om^{(k)}_{ij}$  
is symmetric in $ij$, and it is also trace-free:    
\begin{proposition}\label{tracefree}
For each $k\geq 1$ and in all dimensions $n$ as above for which
$\Om^{(k)}_{ij}$ is defined, one has
$$
g^{ij}\Om^{(k)}_{ij}=0.
$$
\end{proposition}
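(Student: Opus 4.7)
The plan is to derive the trace identity directly from the Ricci-flatness of the ambient metric $\gt$. Since $\gt$ is Ricci-flat (to sufficient order in $\rho$, which is guaranteed by the hypothesis $n$ odd or $n > 2(k+1)$), the identity
$$
\gt^{JK}\Rt_{IJKL} = 0
$$
holds as a tensor equation at $\rho=0$. Taking $k-1$ covariant derivatives and using $\widetilde{\nabla}\gt = 0$ to commute the inverse metric through, this lifts to
$$
\gt^{JK}(\widetilde{\nabla}^{k-1}\Rt)_{IJKL,M_1\cdots M_{k-1}} = 0,
$$
again valid to sufficient order at $\rho=0$ so that evaluation there makes sense.

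Next, I would specialize the free indices, setting $I = L = \infty$ and $M_1 = \cdots = M_{k-1} = \infty$, and evaluate at $\rho = 0$, $t=1$. From the form \eqref{ambmetric} of the ambient metric, one computes that the inverse $\gt^{AB}$ at $\rho=0$, $t=1$ has only the nonvanishing entries $\gt^{0\infty} = \gt^{\infty 0} = 1$ and $\gt^{ij} = g^{ij}$ (in particular $\gt^{00} = 0$ and $\gt^{\infty\infty} = -2\rho = 0$ at $\rho=0$). Expanding the contraction over $J,K$ therefore yields
$$
2(\widetilde{\nabla}^{k-1}\Rt)_{\infty 0 \infty \infty,\,\infty\cdots\infty}\big|_{\rho=0,\,t=1} + g^{ij}(\widetilde{\nabla}^{k-1}\Rt)_{\infty ij\infty,\,\infty\cdots\infty}\big|_{\rho=0,\,t=1} = 0.
$$

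The first term vanishes by the Riemann antisymmetry in the last pair of curvature indices, which is inherited by every covariant derivative $\widetilde{\nabla}^{r}\Rt$: the last pair is $(\infty,\infty)$, so the component is zero. The second term is precisely $g^{ij}\Om^{(k)}_{ij}$ by Definition~\ref{extobs}, which gives the proposition.

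The main thing to check carefully is that the Ricci-flatness identity and its $(k-1)$-fold covariant derivative genuinely vanish at $\rho=0$ for the range of $k$ in which $\Om^{(k)}_{ij}$ is defined. When $n$ is odd there is nothing to check, since $\gt$ is Ricci-flat to infinite order. When $n$ is even, one must track that the order of vanishing in $\rho$ of $\Ric(\gt)$ (together with the fact that each covariant differentiation in the $\infty$ direction drops the order by one) is still strictly positive when $k - 1$ derivatives are taken, and the hypothesis $n > 2(k+1)$ supplies the margin needed for this. Symmetry and antisymmetry of $\widetilde{\nabla}^{k-1}\Rt$ in the Riemann index pairs are standard and pose no obstacle.
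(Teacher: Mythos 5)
Your proof is correct and is essentially the paper's own argument: contract the $(k-1)$-fold covariant derivative of the Ricci identity $\gt^{JK}\Rt_{IJKL,M_1\cdots M_{k-1}}=0$ over the middle pair with all free indices set to $\infty$, use the explicit form of $\gt^{-1}$ at $\rho=0$, $t=1$, and kill the off-diagonal $0\infty$ contributions by the skew-symmetries of $\Rt$, with the even-dimensional order-of-vanishing check (which you correctly flag as the one point needing care) deferred in the paper to Proposition 6.4 of \cite{FG2}. The only cosmetic slip is writing the two cross terms as $2\Rt_{\infty 0\infty\infty,\infty\cdots\infty}$: by the first-pair skew-symmetry and the pair-interchange symmetry these two terms are negatives of one another rather than equal, but since each vanishes individually this does not affect the conclusion.
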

\begin{proof}
This is a consequence of the Ricci-flatness of the ambient metric (to the 
appropriate order if $n$ is even).  First suppose that $n$ is odd.  
Since $\Ric(\gt)=O(\rho^\nf)$, we have 
\begin{equation}\label{ricci0}
\gt^{IJ}\Rt_{KIJL,M_1\cdots M_r}=0
\end{equation}
at $\rho =0$ for all choices of $KLM_1\cdots M_r$.  Take all of
$KLM_1,\cdots M_r$ to be $\nf$.  At $\rho=0$ we have  
$$
\gt^{IJ} = 
\left(
\begin{matrix}
0&0&t^{-1}\\
0&t^{-2}g^{ij}&0\\
t^{-1}&0&0
\end{matrix}
\right).
$$
The terms in \eqref{ricci0} with $IJ=0\nf$ or $\nf 0$ vanish by 
skew-symmetry of $\Rt_{KIJL}$ in $KI$ and $JL$.  Thus \eqref{ricci0}
reduces to $g^{ij}\Rt_{\nf ij\nf,\nf\ldots \nf}=0$ as desired.  

The same argument applies if $n$ is even, so long as one checks that 
the order vanishing of $\Ric(\gt)$ is sufficient to under the
restriction $n>2(k+1)$.  This is precisely the statement of Proposition 6.4 
of \cite{FG2}.  
\end{proof}

Since for $g$ locally conformally flat, all conformal curvature tensors are 
defined and vanish whether $n$ is even or odd, in 
particular it follows that $\Om^{(k)}_{ij}$ is defined and 
$\Om^{(k)}_{ij}=0$ for all $k$ for locally conformally flat $g$.   
This is also true if $g$ is Einstein; see Proposition
7.6 of \cite{FG2}.  Note that general conformal curvature tensors do not  
vanish for Einstein metrics; for example the Weyl tensor is a conformal
curvature tensor.  For $n$ even and $g$ Einstein, the vanishing of
$\Om^{(n/2-1)}_{ij}$ is actually the condition used to normalize the
indeterminacy in the ambient metric; see Proposition 7.7 of \cite{FG2}.

Each obstruction tensor has divergence zero.  But 
this property does not extend to the extended obstruction tensors.  Already
this fails for $k=1$:  the divergence of the Bach tensor is given by
$B_{ij,}{}^j = (n-4)P^{jk}C_{jki}$.  

Next we define ``higher Cotton tensors'', which will enter into the
conformal transformation law of the extended obstruction tensors.  
\begin{definition}\label{gencotton}
Let $k\geq 1$.  Suppose that $n$ is odd or $n$ is even and  $n\geq
2(k+1)$.  Define the $k$-th higher Cotton tensor $C^{(k)}_{ijl}$ 
by: 
$$
C^{(k)}_{ijl} = 2\Rt_{\infty  
  (ij)l,\,\underbrace{\scriptstyle{\infty\ldots\infty}}_{k-1}}
  +\Rt_{\infty  
  ij\infty,\,\underbrace{\scriptstyle{l\infty\ldots\infty}}_{k-1}}
  +\Rt_{\infty  
  ij\infty,\,\underbrace{\scriptstyle{\infty l\infty\ldots\infty}}_{k-1}} 
  +\cdots 
  +\Rt_{\infty  
  ij\infty,\,\underbrace{\scriptstyle{\infty\ldots\infty l}}_{k-1}}.
$$
Here all $\Rt$ components are evaluated at $\rho=0$, $t=1$.  
\end{definition}

For $k$ and $n$ as in Definition~\ref{gencotton}, $C^{(k)}_{ijl}$ is a  
polynomial  
natural tensor of the initial metric $g$.  As for the extended obstruction
tensors, it is defined and vanishes for 
all $k$ for $g$ 
Einstein or locally conformally flat.  Equation \eqref{curv0} shows   
that 
$$
C^{(1)}_{ijl}=2C_{(ij)l}.
$$
The tensors $C_{ijl}$ and $C^{(1)}_{ijl}$ are equivalent; 
$C_{ijl}$ can be recovered from $C^{(1)}_{ijl}$ by   
$C_{ijl}=\frac23 C^{(1)}_{i[jl]}$.  
It is clear that $C^{(k)}_{ijl}$ is symmetric in $ij$, and it is also
trace-free in these indices:
\begin{proposition}\label{Ctracefree} 
For each $k\geq 1$ and in all dimensions $n$ as above for which
$C^{(k)}_{ijl}$ is defined, one has
$$
g^{ij}C^{(k)}_{ijl}=0. 
$$
\end{proposition}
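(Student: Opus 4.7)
The plan is to mimic the proof of Proposition~\ref{tracefree}: contract the definition of $C^{(k)}_{ijl}$ against $g^{ij}$ and reduce each of the resulting terms using the Ricci-flatness of the ambient metric $\gt$. Since $g^{ij}$ is symmetric, the trace $g^{ij}C^{(k)}_{ijl}$ breaks into two kinds of pieces: $2g^{ij}\Rt_{\infty ijl,\,\infty\cdots\infty}$ (the symmetrization in $(ij)$ being redundant), and $k-1$ pieces of the form $g^{ij}\Rt_{\infty ij\infty,\,M_1\cdots M_{k-1}}$ in which exactly one derivative index $M_a$ equals $l$ and the rest equal $\infty$.

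For each piece of the second kind I would apply the Ricci-flatness identity $\gt^{IJ}\Rt_{I\infty J\infty,\,M_1\cdots M_{k-1}}=0$ at $\rho=0$ (which holds to the required order by Proposition~6.4 of \cite{FG2} under the restriction $n\geq 2(k+1)$ in the even case) and expand using the form of $\gt^{-1}$ at $\rho=0$. The entries $\gt^{00}$ and $\gt^{\infty\infty}$ vanish there, while the off-diagonal contributions $\Rt_{0\infty\infty\infty,\ldots}$ and $\Rt_{\infty\infty 0\infty,\ldots}$ are identically zero by the antisymmetries of the Riemann tensor in its last and first pair of indices, respectively. Only the $\gt^{ij}$-contribution survives, giving $g^{ij}\Rt_{i\infty j\infty,\,M_1\cdots M_{k-1}}=0$, and the curvature symmetry $\Rt_{i\infty j\infty}=-\Rt_{\infty ij\infty}$ then yields $g^{ij}\Rt_{\infty ij\infty,\,M_1\cdots M_{k-1}}=0$.

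The first piece is the delicate step. The same Ricci-flatness manipulation reduces $g^{ij}\Rt_{\infty ijl,\,\infty\cdots\infty}$ at $\rho=0,t=1$ to $-\Rt_{\infty 0\infty l,\,\infty\cdots\infty}|_{\rho=0,t=1}$, and the main obstacle is to show this residual component vanishes, as it is not forced to zero by curvature symmetries alone. To handle it I would exploit the dilation vector field $T=t\pa_t$. From the form $\gt=2t\,dt\,d\rho+2\rho\,dt^2+t^2g_\rho$ one checks that $T$ is the $\gt$-gradient of $t^2\rho$, and combined with the homothety $\cL_T\gt=2\gt$ this gives the exact tensor identity $\nt_IT^J=\delta^J_I$, valid for any extension $g_\rho$. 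Consequently $\nt\nt T=0$, so the Ricci identity yields $T^L\Rt_{KLIJ}=0$ on the ambient space.

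Differentiating this identity $r$ times and using $\nt T=\mathrm{id}$ produces
\[
T^L\,\Rt_{KLIJ,\,M_1\cdots M_r}=-\sum_{a=1}^{r}\Rt_{KM_aIJ,\,M_1\cdots\wh{M_a}\cdots M_r}.
\]
Taking $K=I=\infty$, $J=l$, and all $M_a=\infty$ makes every term on the right equal to $\Rt_{\infty\infty\infty l,\ldots}$, which vanishes by antisymmetry in the first pair. Since $T^L=t\delta^L_0$, we conclude $\Rt_{\infty 0\infty l,\,\infty\cdots\infty}=0$ everywhere, in particular at $\rho=0,t=1$. Combining, $g^{ij}C^{(k)}_{ijl}=0$; the homothety identity is algebraic in $\gt$ and imposes no dimensional restriction, so the argument applies whenever the Ricci-flatness in the first step is available.
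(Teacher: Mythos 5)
Your argument is correct and takes essentially the same route as the paper: the terms with the index $l$ among the derivative indices are handled exactly as in Proposition~\ref{tracefree}, and the remaining term is reduced via ambient Ricci-flatness to the single component $\Rt_{\infty 0\infty l,\,\infty\cdots\infty}$, which is then annihilated by the identity expressing the contraction of a $0$-index into $\nt^r\Rt$ in terms of other components. The only cosmetic difference is that you re-derive that identity from the homothety and gradient properties of the dilation field $T=t\pa_t$, whereas the paper simply cites it as (1) of Proposition 6.1 of \cite{FG2} (and first moves the $0$ to the last slot by the pair symmetry before applying it).
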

\begin{proof}
The proof is similar to that of Proposition~\ref{tracefree}.  
Again assume first that $n$ is odd.  Take $K$, $L$, and all but one of
the $M_s$ to be $\nf$ in \eqref{ricci0} to deduce just as in the proof of
Proposition~\ref{tracefree} that 
$g^{ij}\Rt_{\infty  
ij\infty,\,\underbrace{\scriptstyle{\infty \ldots l
    \ldots\infty}}_{k-1}}=0$   
for any location of the index $l$ after the comma.  The same argument
applied to the first term on the right hand side in 
Definition~\ref{gencotton} shows that at $\rho =0$ and $t=1$ we have 
$$
g^{ij}\Rt_{\infty  
  ijl,\,\underbrace{\scriptstyle{\infty\ldots\infty}}_{k-1}}
+\Rt_{\infty  
  0\nf l,\,\underbrace{\scriptstyle{\infty\ldots\infty}}_{k-1}}=0.
$$
Now (1) of Proposition 6.1 of \cite{FG2} states that 
$$
\Rt_{IJK0,M_1\cdots M_r}
=-\sum_{s=1}^r\Rt_{IJKM_s,M_1\cdots \wh{M_s}\cdots M_r}
$$
at $t=1$.  Applying this along with the symmetries of $\Rt$   
shows that
$\Rt_{\infty  
  0\nf l,\,\underbrace{\scriptstyle{\infty\ldots\infty}}_{k-1}}=
\Rt_{\nf l\infty  
  0,\,\underbrace{\scriptstyle{\infty\ldots\infty}}_{k-1}}
=0$, and the
result follows.

Proposition 6.4 of \cite{FG2} shows that the same argument
applies if $n$ is even and $n\geq 2(k+1)$.  
\end{proof}

We remark that $g^{jl}C^{(k)}_{ijl}=0$ for $1\leq k\leq 3$, but
not for $k=4$.  Also, the symmetry $C^{(1)}_{(ijl)}=0$  
satisfied by $C^{(1)}_{ijl}=2C_{(ij)l}$ does not hold for 
$C^{(2)}_{ijl}$. 

A special case of Proposition~\ref{ctransform} is the conformal 
tranformation law for the extended obstruction tensors:  
\begin{proposition}\label{obslaw}
Let $k\geq 1$.  Let $n$ be odd or even with $n>2(k+1)$. 
Under a conformal change $\gh=e^{2\om}g$, the conformally transformed
extended obstruction tensor is given by:
$$
e^{2k\om}\wh{\Om}^{(k)}_{ij} = \Om^{(k)}_{ij} + 
\sum{}' \Rt_{ABCD,F_1\cdots F_{k-1}}|_{\rho =
  0,\,t=1}p^A{}_\nf p^B{}_i p^C{}_j p^D{}_\nf p^{F_1}{}_\nf
\cdots p^{F_{k-1}}{}_\nf, 
$$
where $p^A{}_I$ is given by \eqref{pmatrix} and $\sum{}'$ denotes the sum 
over all indices except for $ABCDF_1\cdots F_{k-1}=\nf ij\nf \nf \cdots
\nf$.    
\end{proposition}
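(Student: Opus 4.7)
The plan is to recognize this proposition as a direct specialization of Proposition~\ref{ctransform} applied to the specific index pattern $\Rt_{\infty ij\infty,\infty\cdots\infty}$ (with $k-1$ trailing $\infty$'s after the comma) that defines $\Om^{(k)}_{ij}$ in Definition~\ref{extobs}. There is essentially no new calculation to do; the task is just bookkeeping.

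First I would check that Proposition~\ref{ctransform} is indeed applicable. With the index list $IJKLM_1\cdots M_{k-1} = \infty\, i\, j\, \infty\, \infty\cdots\infty$, we have $s_0=0$, $s_M=2$ and $s_\infty=k+1$, so the conformal weight factor $e^{2(s_\infty -1)\om}$ is exactly $e^{2k\om}$, matching the claimed left-hand side. The only thing to verify is the dimension hypothesis when $n$ is even, namely $s_M+2s_\infty\le n+1$. Here $s_M+2s_\infty = 2+2(k+1) = 2k+4$, and under the standing hypothesis $n>2(k+1)$ and $n$ even we have $n\ge 2k+4$, hence $n+1\ge 2k+5>2k+4$, so the inequality holds strictly.

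Next I would isolate the leading term on the right-hand side of \eqref{transformula} so as to identify it with $\Om^{(k)}_{ij}$ itself. Because the matrix $p^A{}_I$ in \eqref{pmatrix} is upper-triangular with diagonal entries $p^0{}_0=1$, $p^a{}_b=\delta^a{}_b$ and $p^\infty{}_\infty=1$, the single summand in $\sum_{A,B,C,D,F_1,\dots,F_{k-1}}$ in which each upper index equals the corresponding lower index, i.e.\ $(A,B,C,D,F_1,\ldots,F_{k-1}) = (\infty,i,j,\infty,\infty,\ldots,\infty)$, contributes precisely
$$
\Rt_{\infty ij\infty,\infty\cdots\infty}|_{\rho=0,t=1}\cdot 1\cdots 1 = \Om^{(k)}_{ij}.
$$
Pulling this diagonal summand out of the full sum on the right-hand side of \eqref{transformula} and renaming the remainder $\sum{}'$ yields exactly the formula stated in the proposition. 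The hardest piece of this argument is simply keeping the index conventions straight; once the identification with Proposition~\ref{ctransform} is in place, no further analysis is required.
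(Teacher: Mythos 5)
Your proposal is correct and is exactly the paper's argument: the paper gives no separate proof, simply noting that the proposition is the special case of Proposition~\ref{ctransform} with index list $\infty\, i\, j\, \infty\,\infty\cdots\infty$, so that $s_\infty=k+1$ gives the weight $e^{2k\om}$ and the diagonal (all-$\delta$) term of the sum is $\Om^{(k)}_{ij}$. Your additional check that $s_M+2s_\infty=2k+4\le n+1$ under the hypothesis $n>2(k+1)$ for $n$ even is a worthwhile detail the paper leaves implicit.
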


Thus the conformal transformation law of the extended obstruction tensors
is given explicitly in terms of conformal curvature tensors and first
derivatives of the conformal factor.  For $k=1$, this reproduces
\eqref{bachlaw}.  By the upper-triangular form of
$p^A{}_I$, all of the conformal curvature tensors appearing in $\sum{}'$
with nonzero coefficient are defined if $n$ is even and $n\geq 2(k+1)$.   
Next we identify the terms in the
transformation law which are linear in $\nabla \om$.  
\begin{proposition}\label{linearlaw}
Let $k$, $n$ be as in Proposition~\ref{obslaw}.  Under conformal change
$\gh=e^{2\om}g$, we have:
$$
e^{2k\om}\wh{\Om}^{(k)}_{ij} = \Om^{(k)}_{ij} -\om^l C^{(k)}_{ijl}
+O(|\nabla \om|^2).
$$
\end{proposition}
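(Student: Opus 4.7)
The plan is to apply Proposition~\ref{obslaw} and pick out the terms of exactly first order in $\nabla\om$. Each factor in the product $p^A{}_\infty p^B{}_i p^C{}_j p^D{}_\infty p^{F_1}{}_\infty\cdots p^{F_{k-1}}{}_\infty$ decomposes, by inspection of \eqref{pmatrix}, into contributions of definite order in $\nabla\om$: a column with lower index $i$ contributes $\delta^b{}_i$ (order $0$, forcing $B=i$) or $\om_i$ (order $1$, forcing $B=0$), while a column with lower index $\infty$ contributes $1$, $-\om^a$, or $-\tfrac12\om_k\om^k$ at orders $0$, $1$, $2$ respectively. The all-order-zero term is precisely the excluded leading configuration $\Rt_{\infty ij\infty,\infty\cdots\infty} = \Om^{(k)}_{ij}$, and any linear-in-$\om$ contribution arises by switching exactly one factor from its order-$0$ default to its order-$1$ variant, with all remaining factors at default.

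There are five kinds of such contributions: switching the $A$, $B$, $C$, or $D$ slot of curvature, or one of the $k-1$ derivative slots $F_1,\ldots,F_{k-1}$. Switching $F_s$ produces
$$-\om^l \,\Rt_{\infty ij\infty,\,\underbrace{\scriptstyle\infty\cdots\infty}_{s-1}\, l\,\underbrace{\scriptstyle \infty\cdots\infty}_{k-1-s}}\bigg|_{\rho=0,\,t=1},$$
so summing over $s=1,\ldots,k-1$ yields exactly the last $k-1$ summands of $C^{(k)}_{ijl}$ in Definition~\ref{gencotton}.

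Next I would show that the $B$ and $C$ slot contributions vanish. The $B$ contribution is $\om_i \Rt_{\infty 0 j\infty,\infty\cdots\infty}$; pair-swap symmetry of $\Rt$ rewrites this as $\om_i\Rt_{j\infty\infty 0,\infty\cdots\infty}$, placing the $0$ in the final curvature slot. Proposition~6.1(1) of \cite{FG2}, applicable here exactly as in the proof of Proposition~\ref{Ctracefree}, then converts this to a multiple of $\Rt_{j\infty\infty\infty,\infty\cdots\infty}$, which vanishes by antisymmetry in the last curvature pair. (For $k=1$ the identity degenerates to $\Rt_{IJK0}=0$ at $t=1$, already visible in \eqref{curv0}.) The $C$ contribution vanishes by the parallel manipulation after first using antisymmetry of the second pair to move $0$ into the last slot.

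Finally, the $A$ and $D$ slot contributions combine to give the remaining piece of $C^{(k)}_{ijl}$. The $D$ slot gives $-\om^l\Rt_{\infty ij l,\infty\cdots\infty}$, while the $A$ slot gives $-\om^l\Rt_{l i j\infty,\infty\cdots\infty}$; pair swap and pair antisymmetry of $\Rt$ rewrite the latter as $-\om^l\Rt_{\infty j i l,\infty\cdots\infty}$. Their sum is $-2\om^l\Rt_{\infty(ij)l,\infty\cdots\infty}$, precisely the first summand in Definition~\ref{gencotton}. Adding everything together yields $-\om^l C^{(k)}_{ijl}$, as required. The main subtlety is in the index bookkeeping together with verifying that the Ricci-flatness identities from \cite{FG2} remain valid in the present range; under the hypothesis $n>2(k+1)$ this is guaranteed by Proposition~6.4 of \cite{FG2}, exactly as in the proof of Proposition~\ref{Ctracefree}.
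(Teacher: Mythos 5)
Your proposal is correct and follows essentially the same route as the paper: expand the sum in Proposition~\ref{obslaw}, keep the terms with exactly one non-diagonal $p$-factor, identify the $A$, $D$, and $F_s$ contributions with the summands of $C^{(k)}_{ijl}$ via the curvature symmetries, and kill the $B$ and $C$ contributions $\om_i\Rt_{\infty 0j\infty,\cdots}$, $\om_j\Rt_{\infty i0\infty,\cdots}$ by the same Proposition~6.1(1) argument used for Proposition~\ref{Ctracefree}. The paper's proof is merely terser, leaving the symmetry manipulation $\Rt_{lij\infty}+\Rt_{\infty ijl}=2\Rt_{\infty(ij)l}$ implicit in the definition of $C^{(k)}_{ijl}$.
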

\begin{proof}
For a term in $\sum{}'$ in Proposition~\ref{obslaw} to be linear in
$\nabla\om$, all $p$'s but one must be $\delta$, and $p^0{}_\nf$   
terms are excluded.  
If we suppress writing $|_{\rho=0,\,t=1}$, we obtain:
\[
\begin{split}
e^{2k\om}\wh{\Om}^{(k)}_{ij} &- \Om^{(k)}_{ij}\\
=&-\om^l\left(\Rt_{l
  ij\nf,\,\underbrace{\scriptstyle{\infty\ldots\infty}}_{k-1}}
  +\Rt_{\nf
  ijl,\,\underbrace{\scriptstyle{\infty\ldots\infty}}_{k-1}}
  +\Rt_{\infty  
  ij\infty,\,\underbrace{\scriptstyle{l\infty\ldots\infty}}_{k-1}}
  +\cdots 
  +\Rt_{\infty  
  ij\infty,\,\underbrace{\scriptstyle{\infty\ldots\infty l}}_{k-1}}\right)\\
&+\om_i\Rt_{\infty  
  0j\infty,\,\underbrace{\scriptstyle{\infty\ldots\infty}}_{k-1}}
+\om_j\Rt_{\infty  
  i0\infty,\,\underbrace{\scriptstyle{\infty\ldots\infty}}_{k-1}}
+O(|\nabla\om|^2)\\
=&-\om^lC^{(k)}_{ijl} 
+\om_i\Rt_{\infty  
  0j\infty,\,\underbrace{\scriptstyle{\infty\ldots\infty}}_{k-1}}
+\om_j\Rt_{\infty  
  i0\infty,\,\underbrace{\scriptstyle{\infty\ldots\infty}}_{k-1}}
+O(|\nabla\om|^2).
\end{split}
\]
However, $\Rt_{\infty  
  0j\infty,\,\underbrace{\scriptstyle{\infty\ldots\infty}}_{k-1}}
=\Rt_{\infty  
  i0\infty,\,\underbrace{\scriptstyle{\infty\ldots\infty}}_{k-1}}
=0$ as in the proof of Proposition~\ref{Ctracefree}, and the result 
  follows.  
\end{proof}

It is possible to view the dimension as a formal parameter and thus regard
each of  
the extended obstruction tensors as a natural tensor depending rationally
on $n$; see the discussion at the end of \S 6 of \cite{FG2} 
(where, however, $n$ is called $d$).    
The following result, which is Proposition 6.7 of \cite{FG2}, justifies the  
name ``extended obstruction tensor''.  
\begin{proposition}\label{residue}
Viewed as a natural tensor rational in the dimension $n$, $\Om^{(k)}_{ij}$
has a simple pole at $n=2(k+1)$ with residue given by 
$$
\operatorname{Res}_{n=2(k+1)} \Om^{(k)}_{ij} = (-1)^k 
\left[ 2^{k-1}(k-1)!\right] ^{-1}\cO_{ij},
$$
where $\cO_{ij}$ denotes the obstruction tensor in dimension $2(k+1)$.  
\end{proposition}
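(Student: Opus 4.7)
The strategy is to write $\Om^{(k)}_{ij}=\Rt_{\nf ij\nf,\nf\cdots\nf}|_{\rho=0,t=1}$ as an explicit linear combination of Taylor coefficients $g^{(l)}_{ij}:=\pa_\rho^l g_{ij}|_{\rho=0}$ with coefficients rational in $n$, and then to read off the pole at $n=2(k+1)$ from the iterative formal solution of $\Ric(\gt)=0$.

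As a first step I would derive, by expanding the $k-1$ iterated ambient covariant derivatives $\widetilde{\nabla}_\nf$ applied to $\Rt_{\nf ij\nf}$, an expression
\[
\Om^{(k)}_{ij} = c_k\,g^{(k+1)}_{ij} + \cQ_{ij}\bigl(g^{(1)},\ldots,g^{(k)}\bigr),
\]
where $c_k$ is an explicit nonzero constant and $\cQ_{ij}$ is polynomial in the indicated arguments with coefficients rational in $n$ but regular at $n=2(k+1)$. This calculation is substantially simplified by the vanishings $\widetilde{\Gamma}^A_{\nf\nf}=0$ (all $A$) and $\widetilde{\Gamma}^0_{\nf i}=\widetilde{\Gamma}^\nf_{\nf i}=0$, so that among mixed ambient Christoffel symbols with an $\nf$ lower index only $\widetilde{\Gamma}^k_{\nf i}=\tfrac12 g^{kl}g'_{il}$ contributes. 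The top-order coefficient $g^{(k+1)}_{ij}$ is produced solely by iterating $\pa_\rho$ on the base identity $\Rt_{\nf ij\nf}=\tfrac12 g''_{ij}-\tfrac14 g^{kl}g'_{ik}g'_{jl}+\cdots$; all other contributions involve only lower-order $g^{(l)}$'s. The case $k=1$ yields $c_1=\tfrac12$, consistent with \eqref{omega1} and \eqref{firstone}.

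Next, each lower-order coefficient $g^{(l)}$ with $1\leq l\leq k$, produced by the inductive solution of $\Ric(\gt)=0$ and viewed as a natural tensor rational in $n$, has poles only at $n\in\{4,6,\ldots,2l\}$. Hence every such $g^{(l)}$ is regular at $n=2(k+1)$, and so is $\cQ_{ij}$. Consequently
\[
\operatorname{Res}_{n=2(k+1)}\Om^{(k)}_{ij}=c_k\operatorname{Res}_{n=2(k+1)}g^{(k+1)}_{ij}.
\]
Solving $\Ric(\gt)_{ij}=0$ at order $\rho^{k-1}$ yields a schematic identity $(n-2(k+1))\,g^{(k+1)}_{ij}=F_{ij}(g^{(1)},\ldots,g^{(k)})$ with $F_{ij}$ polynomial and regular at $n=2(k+1)$; by the very definition of the ambient obstruction tensor, $F_{ij}|_{n=2(k+1)}$ coincides, up to a fixed normalization, with $\cO_{ij}$ in dimension $2(k+1)$.

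Combining the two residue identifications produces a formula of the form $(\text{constant})\cdot\cO_{ij}$; matching this to $(-1)^k[2^{k-1}(k-1)!]^{-1}\cO_{ij}$ pins down the remaining constants and closes the argument. The main obstacle is the explicit determination of $c_k$ for general $k$: the combinatorics of iterated ambient covariant differentiation is delicate and interacts with the normalization of $F_{ij}$ in the Einstein-equation inversion. A natural organizing device is induction on $k$, relating $\Om^{(k)}$ to $\pa_\rho\Om^{(k-1)}|_{\rho=0}$ modulo terms in lower-order Taylor coefficients, so that each inductive step contributes a single explicit combinatorial factor of order $-\tfrac{1}{2(k-1)}$; this would make the denominator $2^{k-1}(k-1)!$ in the stated residue transparent and would isolate the remaining sign to a single overall check.
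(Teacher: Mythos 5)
First, a point of comparison: the paper does not actually prove this proposition --- it is quoted as Proposition 6.7 of \cite{FG2} --- so your proposal has to be measured against the argument in that reference rather than anything in the present text. Your skeleton (isolate the top Taylor coefficient $\pa_\rho^{k+1}g_{ij}|_{\rho=0}$ inside $\Om^{(k)}_{ij}$, check that everything else is regular at $n=2(k+1)$, and locate the pole in the Einstein recursion that determines that coefficient) is the right one, and your regularity claim for the lower-order coefficients is correct. Moreover, the first step is much easier than you fear: the paper's own induction, namely \eqref{inductderiv} together with \eqref{firstterm} (equivalently $\cG^{(k+1)}_{ij}=2\Om^{(k)}_{ij}+\overline{\cG}^{(k+1)}_{ij}\left(P,\Om^{(1)},\ldots,\Om^{(k-1)}\right)$), already gives $\Om^{(k)}_{ij}=\tfrac12\,\pa_\rho^{k+1}g_{ij}|_{\rho=0}+\left(\text{terms in }P,\Om^{(1)},\ldots,\Om^{(k-1)}\right)$ with the universal coefficient $c_k=\tfrac12$ for every $k$, and with correction terms whose poles all sit at $n\leq 2k$. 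There is no delicate combinatorics of iterated ambient differentiation here: the correction term $g^{lm}g'_{m(i}\La^{(k)}_{j)l}$ in \eqref{inductderiv} never touches the leading coefficient.

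The genuine gap is in your final step, which is where all of the content of the stated constant lives, and the mechanism you propose for producing $2^{k-1}(k-1)!$ is not the right one. Since $c_k=\tfrac12$ independently of $k$, the entire factor $(-1)^k\left[2^{k-1}(k-1)!\right]^{-1}$ must come from the relation between $\operatorname{Res}_{n=2(k+1)}\pa_\rho^{k+1}g_{ij}|_{\rho=0}$ and the \emph{normalized} obstruction tensor of dimension $2(k+1)$ --- that is, from the definition of $\cO_{ij}$ in \cite{FG2}, whose normalizing prefactor is precisely $(-1)^{n/2-1}\left[2^{n/2-2}(n/2-2)!\right]^{-1}$ with $n=2(k+1)$, fixed so that $\cO_{ij}$ is the Bach tensor when $n=4$. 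Your sketch gestures at this with ``up to a fixed normalization \ldots pins down the remaining constants'' but never engages the actual definition of $\cO_{ij}$; and your closing suggestion that each inductive step $\Om^{(k-1)}\to\Om^{(k)}$ contributes a factor of order $-\tfrac{1}{2(k-1)}$ is inconsistent with \eqref{inductderiv}, which contributes no such factors. Until the residue of the order-$k$ step of the formal Einstein recursion is matched against that normalization, your argument establishes only that $\Om^{(k)}_{ij}$ has at most a simple pole at $n=2(k+1)$ with residue proportional to $\cO_{ij}$, not the stated identity (nor, strictly, that the pole is actually present, since that requires the proportionality constant to be nonzero).
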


As noted above, in the transformation law in Proposition~\ref{obslaw}, all
of the 
conformal curvature tensors appearing in $\sum{}'$ with nonzero coefficient 
are regular at $n=2(k+1)$.  Therefore, formally taking the residue of this 
transformation law at $n=2(k+1)$ recovers the conformal invariance of 
the obstruction tensor in dimension $2(k+1)$.  Likewise, for $k>1$ we may
consider 
the behavior as $n\rightarrow 2l$ with $2\leq l\leq k$.    
It can be shown that $\Om^{(k)}_{ij}$ and all the conformal curvature
tensors appearing in its transformation law 
have at most simple poles at $n=2k$.
It is possible to justify the relation obtained by formally taking the 
residue at $n=2k$ in the transformation law for $\Om^{(k)}_{ij}$; this
gives the conformal        
transformation law of $\operatorname{Res}_{n=2k}\Om^{(k)}_{ij}$.  In
general, the order of the poles increases with $k-l$.  For example,     
$\Om^{(3)}_{ij}$ has a double pole at $n=4$, with leading coefficient a
nonzero 
multiple of $B_i{}^kB_{kj}$.  In this case, consideration of the
coefficient of $(n-4)^{-2}$ in the 
transformation law in Proposition~\ref{obslaw} recovers the conformal  
invariance of $B_i{}^kB_{kj}$ in dimension 4.  

Now we turn to the proof of Theorem~\ref{coefficientform}, which asserts
that the Taylor coefficients in the ambient metric expansion  
can be written in terms of the Schouten tensor and the extended obstruction
tensors by formulae universal in the dimension.  

\medskip\noindent
{\it Proof of Theorem~\ref{coefficientform}}. 
We prove by induction on $k$ a stronger statement holding not only at $\rho
=0$.  Consider a metric $\gt$ of the form \eqref{ambmetric}, where now
$g_\rho$ is any smooth 1-parameter family of metrics on $M$, i.e. we make
no assumption that $\gt$ is asymptotically Ricci-flat.  For $k\geq 1$,
define 
$$
\Lambda^{(k)}_{ij} = 
\Rt_{\infty  
  ij\infty,\,\underbrace{\scriptstyle{\infty\ldots\infty}}_{k-1}}|_{t=1},
$$
a family of symmetric 2-tensors on $M$ parametrized by $\rho$.  
We claim that for each $k\geq 1$, there is a linear combination 
$\cQ_{ij}^{(k)}$
of partial contractions with respect to $g_\rho^{-1}$ of $g_\rho'$ 
and the $\La^{(l)}$, $1\leq l\leq k-1$, whose coefficients  
are independent of $n$, such that the identity
\begin{equation}\label{inductionidentity}
\pa_\rho^kg_{ij}= \cQ^{(k)}_{ij}\left(g',
\La^{(1)},\ldots,\La^{(k-1)}\right)
\end{equation}
holds for all $\rho$.  Since for $\gt$ asymptotically Ricci-flat, we have 
$g'|_{\rho =0}=2P$ and $\La^{(l)}|_{\rho=0} = \Om^{(l)}$ (for $l<n/2-1$ if
$n$ is even), the first statement of 
Theorem~\ref{coefficientform} follows 
upon setting $\rho =0$.  

Case $k=1$ of \eqref{inductionidentity} is trivial taking
$\cQ^{(1)}_{ij}=g'_{ij}$.  For $k=2$, we use an explicit calculation of the
component $\Rt_{\nf ij\nf}$ of a metric \eqref{ambmetric}.  The
Christoffel symbols of $\gt$ can be written explicitly; see (3.16) of
\cite{FG2}.  From this it is straightforward to calculate the curvature
tensor of $\gt$; see (6.1) of \cite{FG2}.  One obtains in particular
\begin{equation}\label{firstR}
\Rt_{\nf ij\nf}|_{t=1}= 
\frac12\left(g_{ij}''-\frac12 g^{kl}g'_{ik}g'_{jl}\right).
\end{equation}
Thus
\begin{equation}\label{k=2}  
g_{ij}''=2\La^{(1)}_{ij}+\frac12 g^{kl}g'_{ik}g'_{jl},
\end{equation}
which is a relation of the form \eqref{inductionidentity} for $k=2$.  

We need a preliminary calculation before proceeding with the induction
argument.  The calculation of the covariant derivative in terms of
Christoffel symbols gives
\[
\begin{split}
\Rt_{\infty  
  ij\infty,\,\underbrace{\scriptstyle{\infty\ldots\infty}}_{k+1}}
=\pa_\rho \Rt_{\infty  
  ij\infty,\,\underbrace{\scriptstyle{\infty\ldots\infty}}_{k}}
&-\Gat_{\nf\nf}^A\Rt_{A
  ij\infty,\,\underbrace{\scriptstyle{\infty\ldots\infty}}_{k}}
-\Gat_{i\nf}^A\Rt_{\nf
  Aj\infty,\,\underbrace{\scriptstyle{\infty\ldots\infty}}_{k}}\\
&-\Gat_{j\nf}^A\Rt_{\nf
  iA\infty,\,\underbrace{\scriptstyle{\infty\ldots\infty}}_{k}}
-\Gat_{\nf\nf}^A\Rt_{\nf
  ijA,\,\underbrace{\scriptstyle{\infty\ldots\infty}}_{k}}\\
&-\Gat_{\nf\nf}^A\Rt_{\nf
  ij\infty,\,\underbrace{\scriptstyle{A\ldots\infty}}_{k}}
-\ldots
-\Gat_{\nf\nf}^A\Rt_{\nf
  ij\infty,\,\underbrace{\scriptstyle{\infty\ldots A}}_{k}}.
\end{split}
\]
Now (3.16) of \cite{FG2} shows that these Christoffel symbols are given by:  
$$
\Gat^A_{\nf\nf}=0\quad \text{for all}\quad A
$$
and
$$
\Gat^0_{i\nf}=0,\qquad \Gat^l_{i\nf}=\tfrac12 g^{lm}g'_{im},\qquad
\Gat^\nf_{i\nf}=0.
$$
Therefore 
\begin{equation}\label{inductR}
\Rt_{\infty  
  ij\infty,\,\underbrace{\scriptstyle{\infty\ldots\infty}}_{k+1}}
=\pa_\rho \Rt_{\infty  
  ij\infty,\,\underbrace{\scriptstyle{\infty\ldots\infty}}_{k}}
-\tfrac12 g^{lm}g'_{im}\Rt_{\nf
  lj\infty,\,\underbrace{\scriptstyle{\infty\ldots\infty}}_{k}}
-\tfrac12 g^{lm}g'_{jm}\Rt_{\nf
  il\infty,\,\underbrace{\scriptstyle{\infty\ldots\infty}}_{k}}.
\end{equation}
The $\rho$ derivative commutes with restriction to $t=1$, so this can be
written in terms of the $\La^{(k)}_{ij}$ as  
\begin{equation}\label{inductderiv}
\pa_\rho \La^{(k)}_{ij}{}=\La^{(k+1)}_{ij}{} 
+ g^{lm}g'_{m(i}\La^{(k)}_{j)l}.
\end{equation}

Now we prove that there is an identity of the form
\eqref{inductionidentity} by induction on $k\geq 2$.  Suppose that 
\eqref{inductionidentity} holds for $k$.  Differentiate this relation with
respect to $\rho$.  Each of the summands in 
$\cQ^{(k)}_{ij}$ is a product of factors $g^{-1}$, $g'$, and the
$\La^{(l)}$ for $1\leq l\leq k-1$.  The derivative of any such factor is
again a sum of products of the same form, except that also $\La^{(k)}$ can
appear.  In fact,  
$(g^{-1})'= -g^{-1}g'g^{-1}$, $g''$ is given by \eqref{k=2}, and the 
derivative of a $\La^{(l)}$ by \eqref{inductderiv}.  Therefore the Leibnitz
rule gives a relation of the form \eqref{inductionidentity} for $k+1$.    
This completes the induction and thus also the proof of the first statement
of Theorem~\ref{coefficientform}.    

It is easily seen by induction starting with \eqref{k=2} and using
\eqref{inductderiv} that for $k\geq 2$, $\cQ^{(k)}_{ij}$ has the form  
\begin{equation}\label{firstterm}
\cQ^{(k)}_{ij}= 2\La^{(k-1)}_{ij} 
+ \overline{\cQ}^{(k)}_{ij}\left(g', \La^{(1)},\ldots,\La^{(k-2)}\right), 
\end{equation}
where $\overline{\cQ}^{(k)}_{ij}$ is a linear combination of partial
contractions of the indicated tensors.  Thus 
$$
\cG^{(k)}_{ij}= 2\Om^{(k-1)}_{ij} 
+ \overline{\cG}^{(k)}_{ij}\left(P, \Om^{(1)},\ldots,\Om^{(k-2)}\right)
$$
for some $\overline{\cG}^{(k)}_{ij}$.  It follows that 
\begin{equation}\label{K}
g^{ij}\pa_\rho^kg_{ij}|_{\rho=0} = g^{ij}\overline{\cG}^{(k)}_{ij}\left(P,    
\Om^{(1)},\ldots,\Om^{(k-2)}\right)
\end{equation}
for all $k\geq 2$ if $n$ is odd and for 
$2\leq k\leq n/2-1$ if $n$ is even.  However, this reasoning does not apply   
for $k=n/2$ if $n$ is even, since $\Om^{(n/2-1)}_{ij}$ is not defined.
Nonetheless we claim that this is true also for $k=n/2$, so that  
\begin{equation}\label{realK}
\cT^{(k)}=g^{ij}\overline{\cG}^{(k)}_{ij}
\end{equation}
in Theorem~\ref{coefficientform}.  To see this, the discussion following
(3.16) of \cite{FG2} shows  
that for $n$ even, $g^{ij}\pa_\rho^{n/2}g_{ij}|_{\rho=0}$ is determined by
the condition $\Rt_{\infty\infty}=O(\rho^{n/2-1})$.  Now
$$
\Rt_{\infty\infty,\,\underbrace{\scriptstyle{\infty\ldots\infty}}_{n/2-2}}  
=-\gt^{IJ}\Rt_{\infty
  IJ\infty,\,\underbrace{\scriptstyle{\infty\ldots\infty}}_{n/2-2}} 
=-t^{-2}g^{ij} \Rt_{\infty 
ij\infty,\,\underbrace{\scriptstyle{\infty\ldots\infty}}_{n/2-2}}.   
$$
Therefore $g^{ij} \Rt_{\infty ij\infty,
\,\underbrace{\scriptstyle{\infty\ldots\infty}}_{n/2-2}}|_{\rho=0}=0$  
if $\Rt_{\infty\infty}=O(\rho^{n/2-1})$.  Hence 
$g^{ij}\pa_\rho^{n/2}g_{ij}|_{\rho=0}$ is determined by requiring
$g^{ij} \La^{(n/2-1)}_{ij}|_{\rho =0} =0$.  
So setting $k=n/2$ in 
\eqref{firstterm}, taking the trace, and restricting to $\rho=0$
proves the second statement of Theorem~\ref{coefficientform} with 
$\cT^{(k)}$ given by \eqref{realK}.   
\stopthm

Equations \eqref{2derivs} and \eqref{firstone} show that     
$$
\cG^{(1)}_{ij}=2P_{ij},\qquad 
\cG^{(2)}_{ij}=2\Om^{(1)}_{ij} +2P_i{}^kP_{kj}. 
$$
Thus $\cT^{(2)}=2P^{ij}P_{ij}$.  Formulae for $\cG^{(k)}_{ij}$ 
for $k=3$, 4, 5 are given in \eqref{gformulae}.   

\medskip
\noindent
{\it Proof of Corollary~\ref{vform}}. 
Taylor expanding the square root of the determinant in \eqref{ambexpansion}
shows that 
$v_k$ can be written as a linear combination of complete contractions
of the Taylor coefficients $\pa_\rho^lg_{ij}|_{\rho=0}$ for $1\leq l\leq
k-1$ and also $g^{ij}\pa_\rho^kg_{ij}|_{\rho=0}$.  (See the end of this 
section for more details.)  Equation 
\eqref{derivform} shows that $\pa_\rho^lg_{ij}|_{\rho=0}$ for $1\leq l\leq 
k-1$ involves only the $\Om^{(s)}_{ij}$ with $s\leq k-2$, and  
\eqref{K} shows that this is also the case for  
$g^{ij}\pa_\rho^kg_{ij}|_{\rho=0}$.  
\stopthm

Theorem~\ref{atmost2} implies that for a fixed background metric $g$, the
equation $v_k(e^{2\om}g)=c$ is second order 
in the unknown $\om$, even though for $k\geq 2$, $v_k(g)$
depends on derivatives of $g$ of order up to $2k-2$.  
It is
possible to say more about the form of $v_k(e^{2\om}g)$ as a function 
of $\om$ with $g$ fixed.  
First we show that 
$\cG^{(k)}_{ij}$ and $\cV_k$ 
have a weighted homogeneity with respect their arguments.  
Consider a constant rescaling 
$\gh=s^2 g$ with $0<s\in \R$.  The ambient metrics 
\eqref{ambmetric} are related by the diffeomorphism 
$$
\widehat{t}=ts^{-1},\qquad \widehat{x}=x,\qquad
\widehat{\rho}=\rho s^2,
$$ 
with $\gh_{\widehat{\rho}}=s^2 g_{\rho}$.  It follows that   
$\pa_{\widehat{\rho}}^k\gh_{ij}|_{\widehat{\rho}=0}
=s^{2-2k}\pa_\rho^kg_{ij}|_{\rho=0}$.  Thus if $\widehat{\cG}^{(k)}_{ij}$
denotes $\cG^{(k)}_{ij}$ evaluated for the metric $\gh$, then 
\begin{equation}\label{Pm}
\widehat{\cG}^{(k)}_{ij}= s^{2-2k}\cG^{(k)}_{ij}.
\end{equation}
Suppose a term appears in
$\cG^{(k)}_{ij}$ whose homogeneity degrees with respect to 
$P$, $\Om^{(1)}, \ldots, \Om^{(k-1)}$ are  
$d_0$, $d_1, \ldots, d_{k-1}$, resp., and let 
$d=\sum_{l=0}^{k-1}d_l$ denote the total degree.  
Such a term necessarily involves $d-1$ contractions with respect to 
$g^{-1}$.  
By Proposition~\ref{obslaw}, the extended obstruction tensors transform by 
$\widehat{\Om}^{(l)}=s^{-2l}\Om^{(l)}$, and of course 
$\widehat{P}=P$ and $\gh^{-1}=s^{-2}g^{-1}$.
Thus \eqref{Pm} gives
$$
-2(d-1)+\sum_{l=1}^{k-1}(-2l)d_l = 2-2k,
$$ 
or
\begin{equation}\label{homo}
\sum_{l=0}^{k-1}(l+1)d_l = k.
\end{equation}
This same relation holds for terms appearing in $\cV_k$ 
since $\widehat{v}_k=s^{-2k}v_k$ and $\cV_k$ involves one more  
contraction because it is a scalar.  Of course, $d_{k-1}=0$ for $\cV_k$.  

\medskip
\noindent
{\it Proof of Theorem~\ref{structure}}.
Write $v_k(g)=\cV_k\left(P, \Om^{(1)},\ldots,\Om^{(k-2)}\right)$ as
a linear combination of complete contractions of $P$
and the $\Om^{(l)}$ as in Corollary~\ref{vform}.  The contractions which
occur all satisfy \eqref{homo} with $d_{k-1}=0$.  Collect the contractions 
according to their homogeneity degree $m(=d_0)$ in $P$:  write  
$$
v_k(g)=\sum_{m=0}^k\cV_{k,m}\left(P,
\Om^{(1)},\ldots,\Om^{(k-2)}\right), 
$$
where $\cV_{k,m}$ is the sum of the contractions which are
homogeneous of degree $m$ in $P$.  Observe first that 
$\cV_{k,k-1}=0$ since there are no solutions to \eqref{homo} with 
$d_0=k-1$.  Next, note that $\cV_{k,k}$ depends only on $P$ since $d_0=k$ 
in \eqref{homo} forces $d_l=0$ for $l>0$.  Also, 
$v_k(g)=\cV_{k,k}(P)$ if $g$ is conformally flat, since in this case 
all $\Om^{(l)}=0$.  Since $v_k(g)=\sigma_k(g^{-1}P)$ for $g$ conformally   
flat, it follows that $\cV_{k,k}(P)=\sigma_k(g^{-1}P)$ for general 
$g$ because any symmetric 2-tensor $P_{ij}$ at a point arises as the  
Schouten tensor of some conformally flat metric.  Thus 
$$
v_k(g)=\sigma_k(g^{-1}P)+\sum_{m=0}^{k-2}\cV_{k,m}\left(P,  
\Om^{(1)},\ldots,\Om^{(k-2)}\right),
$$
where $\cV_{k,m}\left(P,\Om^{(1)},\ldots,\Om^{(k-2)}\right)$ is  
homogeneous of degree $m$ in $P$.  

Evaluating at $\wh{g}$ gives 
$$
v_k(\gh)=\sigma_k(\gh^{-1}\wh{P})+\sum_{m=0}^{k-2}\wh{\cV}_{k,m}, 
$$
where $\wh{\cV}_{k,m}$ denotes
$\cV_{k,m}\left(P,\Om^{(1)},\ldots,\Om^{(k-2)}\right)$ evaluated for the 
metric $\gh$, i.e. $P$ and the $\Om^{(l)}$ are replaced by 
$\wh{P}$, $\wh{\Om}^{(l)}$, and the contractions are taken with respect to 
$\gh$.  Now $\sigma_k(\gh^{-1}\wh{P})=e^{-2k\om}\sigma_k(g^{-1}\wh{P})$.  
If we take into account the scaling of $v_k$ and of the  
$\Om^{(l)}$ as in the proof of \eqref{homo}, it follows that  
$$
\wh{\cV}_{k,m}=e^{-2k\om}\cV_{k,m}
\left(\wh{P},e^{2\om}\wh{\Om}^{(1)},
\ldots,e^{2(k-2)\om}\wh{\Om}^{(k-2)}\right), 
$$
where on the right hand side, 
$\cV_{k,m}
\left(\wh{P},e^{2\om}\wh{\Om}^{(1)},
\ldots,e^{2(k-2)\om}\wh{\Om}^{(k-2)}\right)$ denotes the sum of the 
contractions with respect to $g$ of the indicated tensors.  
Each of the $e^{2l\om}\wh{\Om}^{(l)}$ is given by Proposition~\ref{obslaw}, 
so is a polynomial in $\nabla\om$ with coefficients depending on $g$.    
So if we set 
$$
r_{k,m}(x,\nabla\om,\widehat{P})=
\cV_{k,m}\left(\wh{P},e^{2\om}\wh{\Om}^{(1)},
\ldots,e^{2(k-2)\om}\wh{\Om}^{(k-2)}\right),
$$ 
where the $x, \nabla\om$ arguments in $r_{k,m}$ correspond to the 
$e^{2\om}\wh{\Om}^{(1)},\ldots,e^{2(k-2)\om}\wh{\Om}^{(k-2)}$ arguments in 
$\cV_{k,m}$ and the $\wh{P}$ arguments correspond on both sides, 
then $r_{k,m}$ is a
polynomial in $(\nabla\om$, $\wh{P})$ homogeneous of degree $m$ in
$\wh{P}$, with coefficients depending on $g$.  
It remains only to bound its degree in $\nabla\om$.  

Consider the expression of $e^{2l\om}\wh{\Om}^{(l)}_{ij}$ given by
Proposition~\ref{obslaw}.  Set $\|0\|=0$, $\|i\|=1$ for $1\leq i\leq n$, 
$\|\infty\|=2$, and 
$\|AB\cdots C\|=\|A\|+\|B\|+\cdots +\|C\|$.  Now $p^A{}_I=0$ if 
$\|A\|>\|I\|$ and $p^A{}_I$ is homogeneous of degree $\|I\|-\|A\|$ in 
$\nabla\om$ for $\|I\|\geq \|A\|$.  So the term 
$$
\Rt_{ABCD,F_1\cdots F_{l-1}}|_{\rho =
  0,\,t=1}p^A{}_\nf p^B{}_i p^C{}_j p^D{}_\nf p^{F_1}{}_\nf
\cdots p^{F_{l-1}}{}_\nf 
$$
in Proposition~\ref{obslaw} is of degree  
$\leq 2l+4-\|ABCDF_1\cdots F_{l-1}\|$ in $\nabla\om$.  The conformal
curvature tensors have the property that 
$\Rt_{ABCD,F_1\cdots F_{l-1}}|_{\rho = 0,\,t=1}=0$ if
$\|ABCDF_1\cdots F_{l-1}\|\leq 3$.  This is because in this case at most
three of the indices $ABCDF_1\cdots F_{l-1}$ are nonzero; see Proposition
6.1 of \cite{FG2}.  Thus $e^{2l\om}\wh{\Om}^{(l)}_{ij}$ is of degree 
$\leq 2l$ in $\nabla\om$.  If $d_1, \cdots, d_{k-2}$ denote the homogeneity
degrees with respect to $\Om^{(1)}_{ij}, \cdots, \Om^{(k-2)}_{ij}$,
resp., of a contraction appearing in 
$\cV_{k,m}$, it follows that 
$r_{k,m}(x,\nabla\om,\widehat{P})$ has degree in $\nabla\om$ at most 
$$
\sum_{l=1}^{k-2}2ld_l=2(k-m-\sum_{l=1}^{k-2}d_l).
$$  
Since $d_0=m<k$, \eqref{homo} shows that $d_l>0$ for at least one 
$l\geq 1$, giving the upper bound $2k-2m-2$ for the degree of 
$r_{k,m}$, as  
claimed.  Clearly, for a specific contraction this argument gives a
possibly better bound depending on $\sum_{l=1}^{k-2} d_l$.     
\stopthm

It is possible to derive by hand formulae for some of the extended   
obstruction tensors and expressions for ambient metric coefficients
and renormalized volume coefficients in terms of them.  Consider first the
extended obstruction tensors.  We have already seen that $\Om^{(1)}_{ij}$
is given by \eqref{omega1}.  Formulae for higher extended obstruction
tensors in terms of the Taylor coefficients $\pa_\rho^kg_\rho|_{\rho =0}$
of the ambient metric may be derived inductively starting with
\eqref{firstR} and using \eqref{inductR}.  
For instance, \eqref{firstR} together with \eqref{inductR} for $k=0$ give: 
$$
\Rt_{\infty ij\infty,\infty}=
\tfrac12 g_{ij}'''-\tfrac12 g_{k(i}''g'_{j)}{}^k
+\tfrac14 g'^{kl}g_{ik}'g_{jl}' -g'_{k(i}\Rt_{j)\infty\infty}{}^k.   
$$
$g'$ and $g''$ at $\rho=0$ are given by \eqref{2derivs} and 
$g'''|_{\rho =0}$ in (3.18) of \cite{FG2}.  Substituting these gives 
\[
\begin{split}
(n&-4)(n-6)\Om^{(2)}_{ij}=
B_{ij,k}{}^k -2W_{kijl}B^{kl}-4P_k{}^kB_{ij}\\
&+(n-4)\left(4P^{kl}C_{(ij)k,l}
-2C^k{}_i{}^lC_{ljk}
+C_i{}^{kl}C_{jkl}+2P^k{}_{k,l}C_{(ij)}{}^l-2W_{kijl}P^k{}_mP^{ml}\right). 
\end{split}
\]
Carrying out the algorithm by hand to derive the formulae for a few more 
extended obstruction tensors in terms of the 
$\pa^k_\rho g_{ij}|_{\rho =0}$ is  
manageable; this uses only the form \eqref{ambmetric} of the ambient
metric.  But deriving formulae for $\pa^k_\rho g_{ij}|_{\rho =0}$ 
in terms of the curvature of the base metric for $k\geq 4$ by solving
the Einstein equation is more lengthy.     

A similar calculation gives the second Cotton tensor
$$
C^{(2)}_{ijl}=\left(2\Rt_{\infty(ij)l,\infty}+\Rt_{\infty ij\infty,l}
\right)\Big{|}_{\rho=0, t=1}.
$$
The Bianchi identity allows this to be rewritten as
\begin{equation}\label{cotton2}
C^{(2)}_{ijl}=\left(3\Rt_{\infty ij\infty,l}- \Rt_{\infty li\infty,j}
- \Rt_{\infty lj\infty,i}\right)\Big{|}_{\rho=0, t=1}.
\end{equation}
The covariant derivative can be evaluated using 
\eqref{curv0} and the formulae for the Christoffel symbols of $\gt$ given
by (3.16) of \cite{FG2} to obtain 
$$
\Rt_{\infty ij\infty,l}|_{\rho=0, t=1}
=\frac{B_{ij,l}}{4-n}-2P_l{}^mC_{(ij)m}.
$$
Substituting this into \eqref{cotton2} gives the desired formula for 
$C^{(2)}_{ijl}$.  

The proof of Theorem~\ref{coefficientform} gives the algorithm to make
explicit the formulae \eqref{derivform} for the ambient 
metric coefficients in terms 
of the Schouten tensor and the extended obstruction tensors.  This involves
the same ingredients as in the derivation of the formulae for the extended 
obstruction tensors discussed above; it is just a matter of which set of
quantities one is solving for inductively in terms of which others.  Again,
these 
relations depend only on the form \eqref{ambmetric} of the ambient metric
and not on the values of its Taylor coefficients obtained by solving the 
Einstein equation for $\gt$.  

Set $g^{(k)}_{ij}=\pa^k_{\rho}g_{ij}|_{\rho =0}$.  We have already seen
that 

\smallskip
${}\qquad\frac12 g^{(1)}_{ij}=P_{ij}$

\smallskip
${}\qquad\frac12 g^{(2)}_{ij} = \Om^{(1)}_{ij} + P_i{}^kP_{jk}$.   

\medskip
\noindent
Carrying out the algorithm of the
proof of Theorem~\ref{coefficientform}, one obtains:
\begin{equation}\label{gformulae}
\begin{split}
\tfrac12 g^{(3)}_{ij} =& \Om^{(2)}_{ij} + 4P^k{}_{(i}\Om^{(1)}_{j)k}\\ 
\tfrac12 g^{(4)}_{ij} =& \Om^{(3)}_{ij} + 6P^k{}_{(i}\Om^{(2)}_{j)k}
+4\Om^{(1)}{}^k{}_i\Om^{(1)}_{jk} +4P^k{}_iP^l{}_j\Om^{(1)}_{kl}\\
\tfrac12 g^{(5)}_{ij} =& \Om^{(4)}_{ij} + 8P^k{}_{(i}\Om^{(3)}_{j)k}
+14\,\Om^{(2)}{}^k{}_{(i}\Om^{(1)}_{j)k} +10P^k{}_iP^l{}_j\Om^{(2)}_{kl}
+16\,P^k{}_{(i}\Om^{(1)l}_{j)}\Om^{(1)}_{kl}.
\end{split}
\end{equation}

\noindent
The algorithm also easily gives the leading terms:   
for $k\geq 3$ one has     
$$
\tfrac12 g^{(k)}_{ij}=  
\Om^{(k-1)}_{ij}+2(k-1)P^l{}_{(i}\Om^{(k-2)}_{j)l} 
+\cJ^{(k)}_{ij},
$$
where $\cJ^{(k)}_{ij}$ is a linear combination of  
contractions of the tensors $P, 
\Om^{(1)},\ldots,\Om^{(k-3)}$ satisfying \eqref{homo}.  

Finally, the renormalized volume coefficients can be calculated from the
ambient metric coefficients by expanding the volume form.  Set 
$D=\det g_\rho/\det g_0$.
Then $D'=Dg^{ij}g'_{ij}$.  Successive differentiation of this relation
gives formulae for $\pa^k_\rho D/D$ in terms of $g^{-1}$ and derivatives
of $g$.  For example,
$$
D''=Dg^{ij}g''_{ij}-Dg^{ik}g^{jl}g'_{kl}g'_{ij}+D(g^{ij}g'_{ij})^2.
$$
The Taylor coefficients of $D$ are then obtained by evaluating these 
relations at $\rho=0$ and substituting the above formulae for the Taylor
coefficients of $g_{\rho}$.  Composing with the Taylor expansion of
$\sqrt{x}$ about $x=1$ gives the $v_k$ according to
\eqref{ambexpansion}.  It is 
straightforward but tedious to carry this out.  The result for the first
few $v_k$ is:
\begin{equation}\label{vkformulae}
\begin{split}
v_1 =&\sigma_1\\
v_2 =& \sigma_2\\
v_3 =& \sigma_3 - \tfrac13 \tr \left(P\Om^{(1)}\right)\\
v_4 =& \sigma_4 
+\tfrac13 \tr \left(P^2\Om^{(1)}\right)
-\tfrac13 (\tr P) \tr \left(P\Om^{(1)}\right)
-\tfrac{1}{12}\tr \left(P\Om^{(2)}\right) 
-\tfrac{1}{12}\tr \left(\Om^{(1)}\right)^2.
\end{split}
\end{equation}
Here we have omitted the argument $g^{-1}P$ of the $\sigma_k$.  Also  
omitted are the $g^{-1}$ factors raising the indices in the trace terms.   
These $\sigma_k$ are given by:  
\[
\begin{split}
\sigma_1=&J\\
\sigma_2=&\tfrac12 \left(J^2-\tr P^2\right)\\
\sigma_3=&\tfrac16\left(2\tr P^3 -3J\tr P^2 +J^3\right)\\
\sigma_4 =&
\tfrac{1}{24}\left(-6\tr P^4 +8J\tr P^3 +3(\tr P^2)^2
-6J^2\tr P^2 +J^4\right),
\end{split}
\]
where $J=\tr P=R/2(n-1)$.

\section{Linearization}\label{lin}

Let $X$ be a manifold-with-boundary and set $\partial X=M$.  If $[g]$ is a
conformal class of metrics of signature $(p,q)$ on $M$, recall that a
metric $g_+$ of signature $(p+1,q)$ on $X^\circ$ is said to be conformally
compact with conformal infinity $(M,[g])$ if $u^2g_+$ extends smoothly to 
$X$ with $u^2g_+|_M$ nondegenerate and $u^2g_+|_{TM}\in [g]$, where $u$ is
a defining function for $M$.   
The function $|du|^2_{u^2g_+}\big{|}_M$ is independent of the choice of
$u$; 
$g_+$ is said to be asymptotically hyperbolic if
$|du|^2_{u^2g_+}\big{|}_M=1$.  

Let $g_+$ be asymptotically hyperbolic and let $g$ be a choice of metric in
the conformal class on $M$.  Then there 
is an open neighborhood of $M$ ($= M\times \{0\}$) in $M\times
[0,\infty)$ on which 
there is a unique diffeomorphism $\varphi$ to a neighborhood of $M$ in
$X$ such that $\p|_M$ is 
the identity, and such that $\p^*g_+$ takes the form
$$
\p^*g_+ = r^{-2}\left(dr^2+h(r)\right),
$$
where $h(r)$ (denoted $h_r$ previously) is a 1-parameter family of metrics
on $M$ of signature $(p,q)$  
satisfying $h(0)=g$.  Here $r$ denotes the variable in $[0,\infty)$.  
See \S 5 of \cite{GL}.  

Suppose we choose a conformally related metric
$\gh = e^{2\om}g$, where $\om\in C^\infty(M)$.  
Then $\gh$ induces another diffeomorphism $\ph$ from a neighborhood of
$M$ in 
$M\times [0,\infty)$ to a neighborhood of $M$ in $X$ such that  
$\ph^*g_+ = r^{-2}\left(dr^2 +\widehat{h}(r)\right)$, where
$\widehat{h}(r)$ is a
1-parameter family of metrics on $M$, satisfying $\widehat{h}(0) =\gh$,
uniquely determined by $g_+$, $g$, and $\om$.  Consider     
the infinitesimal dependence of $\hh(r)$ on $\om$.  For each $t$, 
denote by     
$\hh^t(r)$ the 1-parameter family of metrics obtained from the conformal 
representative $\gh^t = e^{2t\om}g$.  Let 
$\delta = \pa_t|_{t=0}$ denote the operation of taking the infinitesimal
conformal variation.  For example, 
$$
\delta h(r) = \pa_t \hh^t(r)|_{t=0}. 
$$
We sometimes suppress 
writing the argument for $h(r)$ and $\delta h(r)$; the $r$ dependence of
$h$ and $\delta h$ is to be understood.  
\begin{theorem}\label{conflaw}
Under infinitesimal conformal change of $g$, $h(r)$ transforms by:    
\begin{equation}\label{law}
(\delta h)_{ij} = \om( 2 -r\pa_r)h_{ij} +2 \nabla_{(i}X_{j)},    
\end{equation}
where $X^i$ is the $r$-dependent family of vector fields on $M$ given by 
\begin{equation}\label{Xformula}
X^i(r)=\int_0^rsh^{ij}(s)\,ds\; \pa_j\om.
\end{equation}
Here $X_j(r)=h_{ij}(r)X^i(r)$, and 
$\nabla_i$ denotes the covariant derivative on $M$ with respect to  
$h(r)$ with $r$ fixed.
\end{theorem}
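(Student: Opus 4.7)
The plan is to compute the infinitesimal variation directly by tracking the 1-parameter family of normal-form diffeomorphisms. Write $\gh^t = e^{2t\om}g$ and let $\p_t$ be the associated diffeomorphism, so that $\p_t^*g_+ = r^{-2}(dr^2 + h^t(r))$ with $h^t(0)=\gh^t$; set $\p_0 = \p$ and $h^0 = h$. Form $\Psi_t = \p^{-1}\circ \p_t$, which is a 1-parameter family of diffeomorphisms of a neighborhood of $M\times\{0\}$ in $M\times[0,\ep)$ with $\Psi_0 = \mathrm{id}$ and $\Psi_t|_M = \mathrm{id}$ (since both $\p_t$ and $\p$ are the identity on $M$). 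Let $V = \pa_t \Psi_t|_{t=0}$. From $\Psi_t^*(\p^*g_+) = \p_t^*g_+$, differentiating at $t=0$ yields the master equation
\begin{equation*}
\mathcal{L}_V\bigl(r^{-2}(dr^2 + h(r))\bigr) = r^{-2}\,\delta h(r),
\end{equation*}
together with the boundary condition $V|_{M} = 0$.

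Next I would decompose $V = V^0 \pa_r + V^j\pa_j$ and write out the three independent components of $\mathcal{L}_V \bar g$ (with $\bar g = r^{-2}(dr^2+h(r))$). The $dr\,dr$ component of the right-hand side must vanish, giving $\pa_r V^0 = V^0/r$, so $V^0 = rc(x)$ for some function $c$ on $M$ (using $V^0|_{r=0}=0$ automatically). The mixed $dr\,dx^i$ component must also vanish, which yields the ODE
\begin{equation*}
h_{ij}(r)\,\pa_r V^j = -\pa_i V^0 = -r\,\pa_i c(x).
\end{equation*}
Combined with $V^j|_{r=0}=0$, integration gives $V^j(r) = -\int_0^r s\,h^{jk}(s)\,ds\;\pa_k c$.

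Then I would read off the tangential component, which produces
\begin{equation*}
\delta h_{ij} = -2c\,h_{ij} + rc\,\pa_r h_{ij} + (\mathcal{L}_{V^{\mathrm{tang}}} h)_{ij}.
\end{equation*}
The boundary condition $\delta h(0) = 2\om g$, forced by $h^t(0) = e^{2t\om}g$, determines $c = -\om$ (using that $V^{\mathrm{tang}}$ and its tangential derivatives vanish on $M$). Substituting gives $V^j = \int_0^r s\,h^{jk}(s)\,ds\;\pa_k\om$, which is exactly $X^j$ from \eqref{Xformula}. Finally, the standard identity $\mathcal{L}_X h_{ij} = 2\nabla_{(i}X_{j)}$, with $\nabla$ the $r$-fixed covariant derivative of $h(r)$, converts the Lie-derivative term into the divergence form in \eqref{law}.

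There is no real obstacle; the only point requiring care is the bookkeeping between the two diffeomorphisms $\p$ and $\p_t$ to correctly locate the sign $c = -\om$, and the observation that $V^j|_{r=0}=0$ forces all of its tangential derivatives at $r=0$ to vanish, so that the boundary evaluation of $\mathcal{L}_{V^{\mathrm{tang}}}h$ really is trivial. Once these are in place, the three components of $\mathcal{L}_V\bar g = r^{-2}\delta h$ serve as a triangular system that solves uniquely for $V^0$, then $V^j$, then $\delta h$.
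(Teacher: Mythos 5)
Your proposal is correct and follows essentially the same route as the paper: differentiate the pulled-back normal forms to get $\mathcal{L}_V\bigl(r^{-2}(dr^2+h)\bigr)=r^{-2}\delta h$ with $V|_M=0$, solve the resulting triangular system component by component ($V^0=cr$ from the $dr^2$ part, then the tangential vector field by integrating the mixed part), and fix $c=-\om$ from the boundary condition $\delta h(0)=2\om g$. The only difference is the order in which you pin down $c$ versus $V^j$, which is immaterial.
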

\noindent
Note in \eqref{Xformula} that $\pa_j\om$ is independent of $r$.
An immediate consequence of Theorem~\ref{conflaw} 
is the fact that for each $r$, the transformation rule for 
infinitesimal conformal change of $h(r)$ involves at most second  
derivatives of $\om$.      
\begin{proof}
For each $t$, we have a diffeomorphism $\varphi_t$ such that
$\varphi_t|_M$ is the identity and  
$\varphi_t^*g_+ = r^{-2}\left(dr^2 +\hh^t(r)\right)$.  So
$$
\left(\varphi_0^{-1}\circ \varphi_t\right)^*
\left(\frac{dr^2 +h(r)}{r^2}\right) 
=\frac{dr^2 +\hh^t(r)}{r^2}.
$$
Differentiate with
respect to $t$ at $t=0$ to deduce that there is a vector field $X$ near $M$  
in $M\times [0,\infty)$ such that $X|_M = 0$ and  
$$
\cL_X\left(\frac{dr^2 +h}{r^2}\right) = \frac{\delta h}{r^2},
$$
where $\cL$ denotes the Lie derivative.  
Expanding the left hand side and then multiplying by $r^2$ gives 
\begin{equation}\label{lie}
-2r^{-1}X(r)\left(dr^2 +h\right) +\cL_X(dr^2) + \cL_X h = \delta h.
\end{equation}
Now write $X = X^0\pa_r + X^i\pa_i$.  Then
\[
\begin{split}
X(r)&=X^0\\
\cL_X(dr^2) &= 2dX^0\,dr = 2\pa_rX^0dr^2+ 2\pa_iX^0dx^idr\\
\cL_Xh &= \left(2\nabla_{(i}X_{j)} +X^0 \pa_rh_{ij}\right) dx^idx^j
+2h_{ij}\pa_rX^j drdx^i,
\end{split}
\]
where $X_j = h_{jk}X^k$ and $\nabla_i$ is the covariant derivative on $M$
with respect to $h(r)$ with $r$ fixed.  Substitute these into \eqref{lie}
and then equate the  
coefficients of $dr^2$, $drdx^i$ and $dx^idx^j$ on the two sides of
\eqref{lie}.  One obtains 
\begin{equation}\label{components}
\begin{split}
-2r^{-1}X^0 +2\pa_rX^0 &= 0\\
2\pa_iX^0 + 2h_{ij}\pa_rX^j &=0\\
-2r^{-1}X^0h_{ij}+2\nabla_{(i}X_{j)} +X^0\pa_rh_{ij} &=\delta h_{ij}.  
\end{split}
\end{equation}
The first equation shows that $X^0 = cr$ where $c$ is independent of $r$,
i.e. $c$ is just a function of $x\in M$.   Substitute this into the last 
equation and evaluate at $r=0$.  Recalling that $X=0$ at $r=0$ and 
$\delta h = 2\om h =2\om g$ at $r=0$, one obtains $c=-\om$.  So now we know 
$X^0 = -\om r$.  Substitute this into the second equation to obtain
$$
\pa_r X^i = rh^{ij}\pa_j \om.     
$$
Now integrate in $r$ to solve for $X^i$; $\pa_j \om$ is a constant with
respect to the    
integration.  Using the initial condition $X^i=0$ at $r=0$, one obtains 
$$
X^i = \int_0^r sh^{ij}(s)\,ds\; \pa_j\om.
$$
Substituting $X^0 = -\om r$ into the third line of \eqref{components} gives  
\eqref{law}. 
\end{proof}

It is useful to introduce the new variable $\rho = -\frac12 r^2$ as in   
\S\ref{eot} in the infinitesimal transformation law \eqref{law}.
Set $g(\rho)=h(r)$ (denoted $g_\rho$ previously) and  
$Y^i(\rho)=X^i(r)$.  Then \eqref{law}, \eqref{Xformula} become
\eqref{newlaw}, \eqref{Yformula}.  

Consider now the case where $g_+=r^{-2}\left(dr^2+h(r)\right)$ is a
Poincar\'e metric whose Taylor expansion (to the appropriate order for $n$ 
even) is determined 
along $M$ by the choice of an initial metric $g$ via the  
Einstein equation $\Ric(g_+)=-ng_+$.  The Taylor coefficients of $g(\rho)$  
are the natural tensors studied in \S\ref{eot}, so the Taylor expansion of
\eqref{newlaw} gives the infinitesimal transformation laws
of these tensors.  For example, \eqref{Yformula} shows that 
$\pa_\rho Y^i|_{\rho =0}=-g^{ij}\pa_j\om$, 
so differentiating \eqref{newlaw} once at 
$\rho=0$ and recalling \eqref{2derivs} recovers the infinitesimal
transformation law  $\delta P_{ij}=-\om_{ij}$ of the Schouten tensor. 
In general, in \eqref{newlaw} the term $2\om( 1 -\rho\pa_\rho)g_{ij}$
encodes the scaling  
of each coefficient and the term $2 \nabla_{(i}Y_{j)}$ carries the
dependence on derivatives of $\om$.  It follows that the infinitesimal    
transformation laws of all these natural tensors (subject to the usual 
truncation for $n$ even) involves at most second derivatives of $\om$.

An easy consequence of Theorem~\ref{conflaw} is a similar formula for the 
infinitesimal transformation laws of the renormalized volume  
coefficients.  First suppose that $g_+$ is asymptotically hyperbolic with
conformal infinity $(M,[g])$ but not necessarily asymptotically Einstein,
as in the setting of Theorem~\ref{conflaw}.  Define $v(\rho)$ by 
\eqref{vdef}.    
\begin{proposition}\label{deltav}
Under infinitesimal conformal change of $g(0)$, $v(\rho)$ transforms by:  
\begin{equation}\label{vlaw}
\begin{split}
\delta v &= -2\om \rho\pa_\rho v +v \nabla_{i}^{(\rho)}Y^i\\
&= -2\om \rho\pa_\rho v + \nabla_{i}^{(0)}\left(vY^i\right),
\end{split}
\end{equation}
where $Y^i$ is given by \eqref{Yformula} and $\nabla_i^{(\rho)}$ is the 
covariant derivative with respect to $g(\rho)$ with $\rho$ fixed.  
\end{proposition}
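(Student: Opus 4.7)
The plan is to differentiate the definition \eqref{vdef} directly, using the formula \eqref{newlaw} for $\delta g_{ij}$ supplied by Theorem~\ref{conflaw}. The starting identity is $2\log v(\rho) = \log\det g_\rho - \log\det g_0$, which under infinitesimal conformal variation yields
\[
2\frac{\delta v}{v} = g^{ij}(\rho)\,\delta g_{ij}(\rho) - g^{ij}(0)\,\delta g_{ij}(0).
\]
Since $Y^i(0)=0$ and $g(0)=g$ transforms by $\delta g_{ij}(0)=2\om g_{ij}$, the subtracted term equals $2n\om$. The first term I would compute by substituting \eqref{newlaw} and tracing:
\[
g^{ij}(\rho)\,\delta g_{ij}(\rho) = 2\om\bigl(n - \rho\,g^{ij}\pa_\rho g_{ij}\bigr) + 2\nabla^{(\rho)}_iY^i.
\]

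The key observation is that $\pa_\rho \log\det g_\rho = g^{ij}(\rho)\pa_\rho g_{ij}(\rho)$, which together with $2\log v = \log\det g_\rho - \log\det g_0$ gives $g^{ij}\pa_\rho g_{ij} = 2v^{-1}\pa_\rho v$. Substituting and cancelling the $2n\om$ terms, the identity simplifies to
\[
\delta v = -2\om\rho\,\pa_\rho v + v\,\nabla^{(\rho)}_iY^i,
\]
which is the first equality of \eqref{vlaw}.

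For the second equality, the plan is to use the standard divergence-on-a-half-density identity: for any metric $\gt$ and vector field $Z^i$ one has $\nabla^{(\gt)}_iZ^i = (\det\gt)^{-1/2}\pa_i\!\bigl((\det\gt)^{1/2}Z^i\bigr)$. Applying this with $\gt=g_\rho$ and multiplying by $v=(\det g_\rho/\det g_0)^{1/2}$ produces
\[
v\,\nabla^{(\rho)}_iY^i = (\det g_0)^{-1/2}\pa_i\!\bigl((\det g_0)^{1/2}\,vY^i\bigr) = \nabla^{(0)}_i(vY^i),
\]
since $\pa_i\om$ (and hence $Y^i$ through \eqref{Yformula}) involves no $\rho$-independent obstruction to moving the connection from $g_\rho$ to $g_0$ in this divergence form. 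This gives the second line of \eqref{vlaw}.

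The proof is essentially a computation; the only point requiring care is the bookkeeping that turns the trace of $2\om(1-\rho\pa_\rho)g_{ij}$ against $g^{ij}(\rho)$ into $2n\om - 4\om\rho v^{-1}\pa_\rho v$. The rewriting into divergence form at the end is the conceptually important step, as it is this form that feeds into Theorem~\ref{L} and the Chang--Fang variational characterization mentioned in the introduction.
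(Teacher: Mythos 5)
Your proposal is correct and follows essentially the same route as the paper: take the logarithmic variation of $v$, trace \eqref{newlaw} against $g^{ij}(\rho)$, identify $g^{ij}\pa_\rho g_{ij}$ with $2v^{-1}\pa_\rho v$, and pass to the second line via the identity $v\,\nabla^{(\rho)}_iY^i=\nabla^{(0)}_i(vY^i)$ (which, as you note and the paper merely asserts, holds for any vector field). No gaps.
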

\begin{proof}
Under a conformal tranformation, $\delta\det g(0) = 2n\om\det g(0)$, so  
$$
\frac{\delta v}{v} = \delta \log v= 
\tfrac12 \left( g^{ij}\delta g_{ij} -2n\om\right).  
$$
Substitution of \eqref{newlaw} gives  
$$
\frac{\delta v}{v} =  -\om g^{ij}\rho\pa_\rho g_{ij} 
+ g^{ij}\nabla_i^{(\rho)}Y_j = -2\om \frac{\rho\pa_\rho v}{v}   
+\nabla_i^{(\rho)}Y^i.    
$$
This gives the first line of \eqref{vlaw}.  The second line follows since 
$v \nabla_{i}^{(\rho)}Y^i=\nabla_{i}^{(0)}\left(vY^i\right)$ for any vector
field $Y^i$.  
\end{proof}

\medskip
\noindent
{\it Proof of Theorem~\ref{L}}.  
Take $g_+$ in Proposition~\ref{deltav} 
to be an asymptotically Einstein metric whose Taylor 
expansion is determined by $g=g(0)$.  By \eqref{ambexpansion}, the 
coefficient of $\rho^k$ in $\delta v$ is $\delta v_k$.  So 
taking Taylor coefficients in \eqref{vlaw} and recalling 
\eqref{Yformula} gives  
\eqref{dvkform} with $L^{(k)}_{ij}$ given by the first equality of 
\eqref{Lformula}.  The second equality of \eqref{Lformula} follows 
upon expanding via the Leibnitz rule.  
\stopthm

The fact that the second term on the right hand side of \eqref{vlaw}    
is a divergence implies that it drops out when considering the
infinitesimal conformal change of the volume of $M$ relative to the metrics
$g(\rho)$.  Suppose $M$ is compact and set 
$$
V(\rho)\equiv\Vol_{g(\rho)}(M) =\int_Mv(\rho)\,dv_{g(0)}.
$$
Integration of \eqref{vlaw} gives for each $\rho$:   
\begin{equation}\label{deltavol}
\delta V =\int_M\delta v\, dv_{g(0)}
=-2\int_M \om \rho\pa_\rho v\, dv_{g(0)}.
\end{equation}
Taking Taylor coefficients 
in \eqref{deltavol} (or integrating \eqref{dvkform} over $M$), it follows 
that:  
\begin{proposition}\label{deltavk}
Suppose $k\geq 1$ with $k\leq n/2$ if $n$ is even, and suppose $M$ is
compact.  Then  
$$
\int_M \delta v_k\,dv_g=-2k\int_M v_k \om \,dv_g.
$$
\end{proposition}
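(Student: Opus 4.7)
The plan is to obtain the identity as an immediate consequence of Theorem~\ref{L}. I would integrate \eqref{dvkform} over the compact manifold $M$ against $dv_g$ to get
$$
\int_M\delta v_k\,dv_g = -2k\int_M\om\, v_k\,dv_g + \int_M\nabla_i\!\left(L^{ij}_{(k)}\nabla_j\om\right)dv_g,
$$
and then observe that the second integrand is the divergence of a smooth vector field on $M$, so it integrates to zero by Stokes's theorem on the closed manifold $M$. What remains is the stated identity.

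An alternative that is perhaps even more direct is to extract coefficients from the integrated transformation law \eqref{deltavol} itself, without passing through Theorem~\ref{L}. Expanding $v(\rho)=1+\sum_{j\geq 1}v_j\rho^j$ via \eqref{ambexpansion} gives $\rho\pa_\rho v = \sum_{j\geq 1} jv_j\rho^j$; matching the $\rho^k$ coefficients on the two sides of the second equality in \eqref{deltavol} produces $\int_M\delta v_k\,dv_g = -2k\int_M\om\, v_k\,dv_g$ at once. This is essentially the generating-function form of the first argument, since Theorem~\ref{L} itself was obtained from \eqref{deltavol} by just this kind of coefficient extraction.

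There is no substantive obstacle. The only verification needed is that the divergence theorem is applicable, i.e.\ that $L^{ij}_{(k)}\nabla_j\om$ is a smooth vector field on the compact boundaryless manifold $M$. This is immediate from \eqref{Lformula}: under the hypothesis $k\leq n/2$ (required only when $n$ is even) each $v_{k-l}$ and each $\pa_\rho^{l-1}g^{ij}|_{\rho=0}$ appearing in the sum is a well-defined natural tensor of the initial metric~$g$, so $L^{ij}_{(k)}$ is globally smooth and contracting against $\nabla_j\om$ produces a genuine smooth vector field whose divergence integrates to zero.
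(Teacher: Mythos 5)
Your proposal is correct and matches the paper's own argument: the paper derives the proposition precisely by ``taking Taylor coefficients in \eqref{deltavol} (or integrating \eqref{dvkform} over $M$)'', which are exactly your two routes. Both hinge on the same observation that the divergence term integrates to zero on the compact manifold $M$, so there is nothing to add.
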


Proposition~\ref{deltavk} is the main ingredient in the proof of the result
of Chang-Fang \cite{CF}.  Consider the functionals 
$$
\cF_k(g)=\int_Mv_k(g)\,dv_g
$$ 
as $g$ varies over a conformal class.  $\cF_k$ is defined 
for all $k\geq 1$ if $k$ is odd and for $1\leq k\leq n/2$ if $n$ is even.
It was shown in \cite{G} that if $n$ is even, then $\cF_{n/2}(g)$ is 
conformally invariant, i.e. is constant on each conformal class.  The
Chang-Fang theorem gives the constrained Euler-Lagrange equation for the
other values of $k$:
\begin{theorem}\label{cf}
Suppose $k\geq 1$ and $k<n/2$ if $n$ is even.  The Euler-Lagrange equation
for $\cF_k(g)$ as $g$ varies over a conformal 
class, subject to the constraint $\Vol_g(M)=1$, is $v_k(g)=c$. 
\end{theorem}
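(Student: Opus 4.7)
The plan is to combine Proposition~\ref{deltavk} with the standard variation of the volume form under conformal change, and then invoke a Lagrange multiplier argument. The hypothesis $k<n/2$ (or $n$ odd) will enter precisely to produce the nonzero constant $n-2k$ that forces $v_k$ to be constant at a critical point.

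First I would consider an arbitrary conformal variation $\delta g=2\om g$ and compute the variation of the functional. Since $\delta\,dv_g=n\om\,dv_g$, the Leibnitz rule gives
\[
\delta\cF_k(g)=\int_M\delta v_k\,dv_g+\int_M v_k\,\delta(dv_g)
=\int_M\delta v_k\,dv_g+n\int_M\om\,v_k\,dv_g.
\]
Now Proposition~\ref{deltavk} directly evaluates the first integral, yielding
\[
\delta\cF_k(g)=(n-2k)\int_M\om\,v_k\,dv_g.
\]
The key feature is that this variation is tested against $\om$ with no derivatives appearing on the right-hand side, which is a consequence of the divergence term in \eqref{dvkform} integrating to zero on the closed manifold $M$.

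Next I would impose the volume constraint. Since $\delta\,\Vol_g(M)=n\int_M\om\,dv_g$, the admissible variations are precisely the $\om\in C^\infty(M)$ satisfying $\int_M\om\,dv_g=0$. By the Lagrange multiplier principle, $g$ is a critical point of $\cF_k$ subject to $\Vol_g(M)=1$ if and only if there exists a constant $\mu$ such that
\[
(n-2k)\int_M\om\,v_k\,dv_g=\mu\int_M\om\,dv_g
\]
for all $\om\in C^\infty(M)$; equivalently, $(n-2k)v_k$ is constant on $M$. Here the hypothesis $n\neq 2k$ is essential: dividing by $n-2k$ produces the Euler-Lagrange equation $v_k(g)=c$ as claimed.

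There is no significant obstacle remaining, since Proposition~\ref{deltavk} has already absorbed all of the work: the divergence term $\nabla_i(L^{ij}_{(k)}\nabla_j\om)$ in \eqref{dvkform} integrates to zero, and only the scaling contribution $-2k\om v_k$ survives. I would close with the observation that in the excluded case $n=2k$ the variation $\delta\cF_k$ vanishes identically, which is consistent with the conformal invariance of $\cF_{n/2}$ proved in \cite{G}.
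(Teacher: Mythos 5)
Your proposal is correct and follows essentially the same route as the paper: both compute $\delta\cF_k(g)=(n-2k)\int_M v_k\,\om\,dv_g$ by combining Proposition~\ref{deltavk} with $\delta\,dv_g=n\om\,dv_g$, and then conclude via the Lagrange multiplier argument that $v_k$ must be constant when $n\neq 2k$. The only cosmetic difference is that you phrase the constraint as restricting to mean-zero $\om$ before introducing the multiplier, whereas the paper writes the condition $\delta(\cF_k-\lambda\Vol(M))=0$ for all $\om$ directly; these are equivalent.
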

\begin{proof}
The constrained Euler-Lagrange equation for $\cF_k$ is obtained by
requiring that  
$\cF_k -\lambda \Vol (M)$ vanishes to first order in $\om$ under a
conformal change $\gh=e^{2\om}g$, where $\lambda$ is a Lagrange multiplier.
This is therefore the condition     
\begin{equation}\label{EL}
\delta\left(\cF_k-\lambda \Vol (M)\right)(g)=0\quad \mbox{for all } \om.  
\end{equation}
Proposition~\ref{deltavk} together with the fact that 
$\delta\, dv_g=n\om\,dv_g$ give
$$
\delta \cF_k(g)=\int_M\left(\delta v_k\, dv_g+v_k \delta\, dv_g\right)
=(n-2k)\int_Mv_k\om\, dv_g.
$$
If $n=2k$ we recover the conformal invariance of $\cF_{n/2}$.  Otherwise 
\eqref{EL} becomes 
$$
(n-2k)\int_Mv_k\om\,dv_g-n\lambda \int_M \om \,dv_g=0 
\quad \mbox{for all } \om,
$$
which gives $v_k(g)=n\lambda/(n-2k)=c$.  
\end{proof}

Thus if we fix a background metric $g$ in the conformal class, then the 
critical points of $\cF_k(e^{2\om} g)$ as a function of $\om$ are those
$\om$ 
for which $v_k(e^{2\om}g)=c$.  So we recover the second order fully  
nonlinear operator whose structure was studied in \S\ref{eot}.  
Its linearization at $\om =0$ is of course just $\delta v_k$, 
so Theorem~\ref{L} gives:
\begin{proposition}\label{linear}
Suppose $k\geq 1$ with $k\leq n/2$ if $n$ is even.  
Let $\cP_k(\om)$ denote the linearization at $\om =0$ of the operator 
$\om\rightarrow v_k(e^{2\om}g)$ with $g$ fixed.  Then
$$
\cP_k(\om) = \nabla_i\left(L^{ij}_{(k)}\nabla_j \om\right)-2kv_k\om,
$$
with $L^{ij}_{(k)}$ given by \eqref{Lformula}.  
\end{proposition}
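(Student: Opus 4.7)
The proof is essentially an unpacking of definitions followed by a direct appeal to Theorem~\ref{L}. My plan is to first verify that the operator $\cP_k$ defined in the proposition coincides, as a linear functional of $\om$, with the infinitesimal conformal variation $\delta v_k$ introduced earlier in this section. By definition, $\cP_k(\om)$ is the Gateaux derivative
$$
\cP_k(\om) = \frac{d}{dt}\bigg|_{t=0} v_k\!\left(e^{2t\om}g\right),
$$
with $g$ fixed. On the other hand, $\delta v_k$ was defined as $\pa_t|_{t=0}$ applied to $v_k$ evaluated on the one-parameter conformal family $\gh^t=e^{2t\om}g$, so the two quantities are literally the same. Thus Proposition~\ref{linear} is a direct translation of Theorem~\ref{L} into the language of linearizations.

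The next step is simply to invoke Theorem~\ref{L}, which supplies the identity
$$
\delta v_k = -2k\om\, v_k + \nabla_i\!\left(L^{ij}_{(k)}\nabla_j \om\right),
$$
with $L^{ij}_{(k)}$ given by \eqref{Lformula}. Substituting this into the identification $\cP_k(\om)=\delta v_k$ yields the claimed formula. The ordering of terms and the sign match the statement, and the covariant derivative $\nabla_i$ is the one with respect to the fixed background metric $g=g_0$, consistent with the convention established just after \eqref{Lformula}.

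In short, there is nothing to prove beyond recognizing that the linearization operator and the conformal variation operator are the same object acting on $\om$; the real content was already accomplished in Theorem~\ref{L}, whose proof used the explicit expression for $\delta h(r)$ from Theorem~\ref{conflaw} combined with the logarithmic-derivative computation in Proposition~\ref{deltav}. There is no meaningful obstacle in this final step: the only thing to be careful about is that the argument of $\nabla_i$ in \eqref{dvkform} is already interpreted with respect to $g$ (not $g_\rho$), which is exactly what appears in the statement of Proposition~\ref{linear}.
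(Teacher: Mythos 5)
Your proposal is correct and follows exactly the route the paper takes: the paper also simply observes that the linearization of $\om\mapsto v_k(e^{2\om}g)$ at $\om=0$ is by definition the infinitesimal conformal variation $\delta v_k$, and then reads off the formula from Theorem~\ref{L}. Your additional care about the covariant derivative being taken with respect to the fixed metric $g=g_0$ matches the convention stated after \eqref{Lformula}.
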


In considering \eqref{Lformula}, recall that $g_{ij}(\rho)$ is the series
determined by 
$g=g(0)$ upon formally solving the Einstein equation $\Ric(\gt)=0$.  Hence 
identification of   
$\pa_\rho^{l-1} g^{ij}|_{\rho =0}$ requires knowledge
of the coefficients  
$\pa_\rho^m g_{ij}|_{\rho =0}$ in  
the ambient metric expansion as well as calculation of the 
Taylor coefficients of the inverse in terms of these.  In any case, it is
clear from Theorem~\ref{coefficientform} and Corollary~\ref{vform} that 
each $L^{ij}_{(k)}$ (with $k\leq n/2$ for $n$ even) is a natural tensor
which can be written as a linear 
combination of contractions of the Schouten tensor and the extended
obstruction tensors with coefficients independent of the dimension.  

For small $k$, it is possible to calculate $L^{ij}_{(k)}$ from 
\eqref{Lformula} using \eqref{gformulae} and \eqref{vkformulae}.
Alternately, one can simply linearize the explicit 
expressions \eqref{vkformulae}.  In the latter approach, one uses from 
Proposition~\ref{linear} that $\cP_k$ is determined once one knows its
principal part $L^{ij}_{(k)}$.  Thus it suffices to calculate the principal
part of the linearization from \eqref{vkformulae}.  So in linearizing   
\eqref{vkformulae}, one can 
ignore contributions from the $\Om^{(l)}$ and simply apply the
Leibnitz rule to $P_{ij}$ and use $\delta P_{ij}=-\om_{ij}$.  

Recall that the linearization of 
$\sigma_k$ can be expressed in terms of its polarization.  If $A^i{}_j(t)$ 
is a 
1-parameter family of endomorphisms of a vector space, then the relation
\begin{equation}\label{Tdef}
\sigma_k(A)^{\textstyle{\cdot}} = \tr\left(T_{(k-1)}(A)\Dot A\right) 
\end{equation}
defines an endomorphism-valued polynomial $T_{(k-1)}(A)$ homogeneous   
of degree $k-1$ in $A$.
Let $\sigb_k$ denote the symmetric $k$-linear  
form obtained by complete polarization of $\sigma_k$, i.e. 
$\sigb_k(A_1,\ldots,A_k)$ is linear in each $A_l$, symmetric, and satisfies 
$\sigb_k(A,\ldots,A)=\sigma_k(A)$.  Then the Leibnitz rule shows that  
$$
\tr\left(T_{(k-1)}(A)B\right)=k\sigb_k(A,\ldots,A,B).    
$$
In the sequel, our vector space will be equipped with a non-degenerate
quadratic form 
which we use to raise and lower indices, and $A_{ij}$ will be symmetric.
So we will usually write \eqref{Tdef} in the form
$$
\sigma_k(A)^{\textstyle{\cdot}} = T^{ij}_{(k-1)}(A)\Dot A_{ij}.  
$$

The homogeneity of $\sigma_k$ together with $\delta P_{ij}=-\om_{ij}$ give 
\begin{equation}\label{deltasigmak}
\delta\left(
\sigma_k(g^{-1}P)\right)
=-T^{ij}_{(k-1)}(g^{-1}P)\om_{ij}-2k\sigma_k(g^{-1}P)\om,   
\end{equation}
so that the principal part of the linearization of $\sigma_k(g^{-1}P)$ is 
$-T^{ij}_{(k-1)}(g^{-1}P)$.  
In the following, we suppress writing the argument $g^{-1}P$ of
$T^{ij}_{(k-1)}$ and we write $\Om_{(l)}$ instead of $\Om^{(l)}$.   
Either directly linearizing \eqref{vkformulae} or calculating from
\eqref{Lformula}, one obtains:
\[
\begin{split}
L^{ij}_{(1)}&=-g^{ij}\\
L^{ij}_{(2)}&=-T^{ij}_{(1)}\\  
L^{ij}_{(3)}&=-T^{ij}_{(2)}+\tfrac13 \Om_{(1)}^{ij}\\  
L^{ij}_{(4)}&=-T^{ij}_{(3)}-\tfrac23 P_k{}^{(i}\Om_{(1)}^{j)k} 
+\tfrac13 P_{kl}\Om_{(1)}^{kl}g^{ij}+\tfrac13 P_k{}^k\Om_{(1)}^{ij} 
+\tfrac{1}{12}\Om_{(2)}^{ij}.
\end{split}
\]

Recall from the discussion in \S\ref{eot} that if $g$ is locally 
conformally flat, then the $v_k$ are defined for all $k$ also for $n$
even, and $v_k(g)=\sigma_k(g^{-1}P)$ for all $k$ in all dimensions.  The 
invariance of the ambient metric holds to all orders in all dimensions and
this was the fundamental ingredient used in the proof of
Theorem~\ref{conflaw}.   
Thus all the arguments and results of this section apply without the
restriction $k\leq n/2$ for $n$ even if $g$ is locally
conformally flat.  In particular, this gives another argument for the 
variational character of the $\sigma_k$ in this case.  

The relations
asserted by Theorem~\ref{conflaw}, Proposition~\ref{deltav} and
Theorem~\ref{L} are not obvious for locally conformally flat metrics.
Equation \eqref{confflat} can be written 
\begin{equation}\label{cflat}
g_{ij}(\rho)=g_{ij}(0)+2P_{ij}\rho +P_{ik}P^k{}_j\rho^2.
\end{equation}
Let us set $\gamma = g(0)$ and $A=\gamma^{-1}P$.  Then \eqref{cflat} can be
written $g(\rho)=\ga (I+\rho A)^2$.  Therefore 
\begin{equation}\label{integral}
\int_0^\rho g^{-1}(u)\,du=\int_0^\rho(I+u A)^{-2}\,du\;\ga^{-1}
=\rho(I+\rho A)^{-1}\;\ga^{-1}. 
\end{equation}
Hence
$$
g(\rho)\int_0^\rho g^{-1}(u)\,du = \rho\ga (I+\rho A)\,\ga^{-1}.
$$
Comparing with \eqref{Yformula} gives
\begin{equation}\label{Yflat}
Y_j = -\rho \left(\delta_j{}^k+\rho P_j{}^k\right)\om_k.
\end{equation}
Taking the conformal variation in \eqref{cflat} yields
$$
(\delta g)_{ij} = 2\om \ga_{ij} -2\om_{ij}\rho  
+\left(-2\om P_{ik}P^k{}_j -2\om_{k(i}P_{j)}{}^k\right)\rho^2.
$$
The fact that this agrees with the right hand side of \eqref{newlaw} can be
verified using \eqref{Yflat} and the relation 
between the Levi-Civita connections of $g(\rho)$ and $\ga$ derived in Lemma 
7.3 of \cite{FG2}.  

Similarly, it can be verified directly that if $g$ is locally conformally
flat, 
then \eqref{Lformula} reduces to $L^{ij}_{(k)}=-T^{ij}_{(k-1)}(g^{-1}P)$
for $k\leq n$ and to $0$ for $k>n$, and that \eqref{dvkform} reduces to 
\eqref{deltasigmak}.  The identification of the $L^{ij}_{(k)}$ is clearly  
equivalent to showing that   
$$
v(\rho)\int_0^\rho g^{ij}(u)\,du 
=\sum_{k=1}^n T^{ij}_{(k-1)}(g^{-1}P) \rho^k. 
$$
Now $v(\rho)=\det(I+\rho A)$, so \eqref{integral} shows that this 
can be rewritten as 
$$
\det(I+\rho A)(I+\rho A)^{-1}
=\sum_{k=0}^{n-1} T_{(k)}(A) \rho^k.
$$
It is standard and can be seen in a variety of ways that this 
is a reformulation of the definition of the $T_{(k)}(A)$.  Thus one
concludes that $L^{ij}_{(k)}=-T^{ij}_{(k-1)}(g^{-1}P)$.  Finally,  
\eqref{dvkform} reduces to \eqref{deltasigmak} since 
$\nabla_i\left(T^{ij}_{(k-1)}(g^{-1}P)\right)=0$.  The fact that
$\nabla_i\left(T^{ij}_{(k-1)}(g^{-1}P)\right)=0$
for locally conformally flat metrics follows from the vanishing of the
Cotton tensor and is essentially equivalent to  
the variational characterization of the $\sigma_k$.  See \cite{V} or  
\cite{BG}.  

Theorem~\ref{conflaw} can be used as the basis for another proof of
Theorem~\ref{atmost2}, the fact that under conformal change, the ambient
metric 
coefficients $\pa^k_\rho g_{ij}|_{\rho =0}$ and the $v_k$ depend on at most
second  
derivatives of $\om$.  As observed above, the fact that this is true under  
infinitesimal conformal change is immediate from Theorem~\ref{conflaw}.  
Thus Theorem~\ref{atmost2} follows if we can prove that the
full conformal 
transformation law depends on at most $2$ derivatives of the conformal
factor, assuming that this is the case for the infinitesimal transformation
law.  We formulate a general result along these lines.     

Consider a polynomial natural tensor $T(g)$ depending on a Riemannian
metric in 
dimension $n\geq 2$, of contravariant rank $a$ and covariant rank $b$.  
$T(g)$ may be expressed by evaluating a linear combination of 
partial contractions of covariant indices against contravariant indices 
of $g$, $g^{-1}$, and the
covariant derivatives $\na^{r}R$, $r\geq 0$, of the curvature tensor $R$ of 
$g$.  Each such partial contraction can be written in the form   
\begin{equation}\label{pcontr}
\operatorname{pcontr}\left(\na^{r_1}R\otimes \cdots \otimes \na^{r_M}R
\otimes g\otimes \cdots \otimes g \otimes g^{-1}\otimes \cdots \otimes
g^{-1}\right). 
\end{equation}
Our convention is that the curvature tensor $R$ has contravariant 
rank $0$ and covariant rank $4$.  

We say that $T$ has homogeneity $h\in \R$ if 
$$
T(e^{2\om}g)=e^{h\om}T(g), \qquad \om \in \R.  
$$
We assume throughout that $T$ has a
well-defined homogeneity; this is no loss of generality since a general
natural tensor is the sum of its homogeneous parts in this sense and all of
our considerations respect homogeneity.  
If the contraction \eqref{pcontr} has $P$ factors of $g$,
$Q$ factors of $g^{-1}$, and involves $C$
contractions of a covariant index against a contravariant index, then the 
contravariant rank $a$, covariant rank $b$, and homogeneity $h$ of the
resulting tensor are given by

\[
\begin{split}
a&=2Q-C\\
b&=4M+\sum_{i=1}^M r_i +2P -C\\ 
h&=2(M+P-Q).
\end{split}
\]
In particular, the quantity 
$$
L\equiv 2M+\sum r_i = b-a-h   
$$
is determined just by the rank and homogeneity of $T$.  $L$ is called 
the {\it level} of $T$; it is the total number of derivatives of $g$
occuring in $T$.  Clearly $L\geq 0$.  

If $T$ has homogeneity $h$, the full conformal variation 
$\De T(g,\om)$ of $T$ is defined to be 
$$
\De T(g,\om)\equiv e^{-h\om}T(e^{2\om}g)-T(g) 
$$ 
for smooth $\om$.  Then $\De T(g,\om)$ is a natural tensor depending on $g$ 
and the scalar function $\om$.  It can be obtained by evaluating 
a linear combination of partial contractions of $g$, $g^{-1}$, the 
$\na^rR$ for 
$r\geq 0$, and the covariant derivatives $\na^l \om$, $l\geq 1$, of 
$\om$ with respect to the Levi-Civita connection of $g$, each
contraction of which contains at least one of the $\na^l \om$.  
In this discussion we use a slightly modified infinitesimal conformal   
variation operator $\db$ by subtracting the scaling term from $\de$.
If $T$ is a natural tensor of homogeneity $h$, define  
$$
\db T(g,\om)\equiv \frac{d}{dt} e^{-ht\om}T(e^{2t\om }g)\Big{|}_{t=0}
=\delta T -h\om T.
$$
Then $\db T(g,\om)$ is a natural tensor depending on $g$ and $\om$; it 
is obtained from $\De T(g,\om)$ by keeping only the terms which are linear 
in the derivatives of $\om$.  It is evident 
that when viewed as a function of $g$, $\De T(g,\om)$  
and $\db T(g,\om)$ also have homogeneity $h$:  
$$
\De T(e^{2\Up} g,\om)=e^{h\Up}\De T(g,\om),\qquad
\db T(e^{2\Up} g,\om)=e^{h\Up}\db T(g,\om),\qquad \Up\in\R. 
$$
We may consider the
infinitesimal conformal variation of $\db T$ in $g$: 
$$
\db^2T(g,\om,\Up)\equiv 
\frac{d}{dt} e^{-ht\Up}\db T(e^{2t\Up}g,\om)\Big{|}_{t=0}.
$$
The equality of second mixed partials implies that
\begin{equation}\label{symmetry}
\db^2T(g,\om,\Up) = \db^2T(g,\Up,\om).
\end{equation}

Let $U(g,\om)$ be a polynomial natural tensor depending on $g$ and $\om$
(for example  
$U=\db T$ or $U=\De T$).  For $m\geq 0$, we will say that     
$U$ involves at most $m$ derivatives of $\om$  
if it can be obtained by evaluating a linear combination of 
partial contractions in which only the tensors $\na^l \om$, $1\leq l\leq m$
appear, together with $g$, 
$g^{-1}$, and the $\na^rR$ for $r\geq 0$.  

\begin{proposition}\label{linfull}
Let $m\geq 0$.  If $\db T$ involves at most $m$ derivatives of $\om$,  
then the same is true for $\De T$.  
\end{proposition}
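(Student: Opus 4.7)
The plan is to reduce the statement about the finite variation $\De T$ to iterated infinitesimal variations of $T$, which are then controlled by induction using a symmetry together with the hypothesis.

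First I would substitute $\om\mapsto t\om$ into $F(g,\om):=e^{-h\om}T(e^{2\om}g)$ and show that $F(g,t\om)$ is a polynomial in $t$. The Christoffel symbols $\hat\Ga^k_{ij}$ of $\hat g=e^{2t\om}g$ depend linearly on $t$; the mixed-type curvatures $\hat R^i{}_{jkl}$ and their mixed-type covariant derivatives are polynomials in $t$ with coefficients polynomial in the $\na^l\om$; and the $e^{\pm 2t\om}$ factors arising from lowering/raising indices with $\hat g$ and $\hat g^{-1}$ combine, by homogeneity of $T$, into exactly one factor $e^{ht\om}$ which cancels $e^{-ht\om}$. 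Writing $F(g,t\om)=\sum_{k=0}^{N} t^k F_k(g,\om)$, the Taylor coefficients are identified as $F_k=\tfrac{1}{k!}\db^{k}T(g,\om,\ldots,\om)$, where $\db^{k}$ denotes the $k$-fold iterated infinitesimal variation $\db^{k}T(g,\om_1,\ldots,\om_k):=\db\bigl[\db^{k-1}T(\,\cdot\,,\om_1,\ldots,\om_{k-1})\bigr](g,\om_k)$. Applying the same polynomiality argument to the multi-parameter family $F(g,\sum_i t_i\om_i)$ and invoking equality of mixed partials generalizes \eqref{symmetry} to show that $\db^{k}T$ is symmetric in all $k$ of its scalar arguments. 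Setting $t=1$ then yields the finite sum $\De T(g,\om)=\sum_{k=1}^{N}\tfrac{1}{k!}\db^{k}T(g,\om,\ldots,\om)$, so the proposition reduces to showing that $\db^{k}T(g,\om_1,\ldots,\om_k)$ involves at most $m$ derivatives of each $\om_i$.

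Second, I would prove this bound by induction on $k$, the case $k=1$ being the hypothesis. The core ingredient is the following sub-claim: if a natural polynomial tensor $S$ depending on $g$ and scalar parameters $\psi_1,\ldots,\psi_p$ involves at most $m$ derivatives of each $\psi_i$, then $\db S(g,\psi_0)$ still involves at most $m$ derivatives of each of $\psi_1,\ldots,\psi_p$. This is established by inspection: $\db$ acts on $g$, $g^{-1}$, $\na^{r}R$ (none of which carry $\psi_i$) and on the factors $\na^{l}\psi_i$; using $\db\Ga^{k}_{ij}=\de^{k}_{i}(\psi_0)_j+\de^{k}_{j}(\psi_0)_i-g^{kl}(\psi_0)_l g_{ij}$ together with the recursion $\na^{l}\psi_i=\na(\na^{l-1}\psi_i)-\Ga\cdot\na^{l-1}\psi_i$, induction on $l$ shows that $\db(\na^{l}\psi_i)$ is a polynomial in $\{\na^{l'}\psi_i : l'\leq l\}$ and $\{\na^{s}\psi_0\}$, so no $\psi_i$-derivative of order exceeding $m$ is introduced. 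Applying the sub-claim to $S=\db^{k}T(\,\cdot\,,\om_1,\ldots,\om_k)$ bounds derivatives of $\om_1,\ldots,\om_k$ in $\db^{k+1}T(g,\om_1,\ldots,\om_{k+1})$; the bound on derivatives of $\om_{k+1}$ follows by swapping $\om_1\leftrightarrow\om_{k+1}$ via the symmetry established above and repeating the argument.

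The main obstacle is the sub-claim, and specifically the verification that the variation of the Levi-Civita connection by $\db$ injects only first-derivative factors $\na\psi_0$ (and no higher derivatives of $\psi_0$) into the existing $\na^{l}\psi_i$ terms, so that each $\psi_i$-derivative count is preserved. Everything else is organizational: the polynomiality of $F(g,t\om)$ in $t$ is a weight-tracking argument, the identification of the Taylor coefficients with iterated variations is a routine computation, and the symmetry of $\db^{k}T$ follows by equality of mixed partials applied to the polynomial $F\bigl(g,\sum_i t_i\om_i\bigr)$.
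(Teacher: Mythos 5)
Your strategy is genuinely different from the paper's: you reassemble $\De T$ from the finite Taylor expansion $\De T(g,\om)=\sum_{k\geq 1}\tfrac{1}{k!}\db^kT(g,\om,\ldots,\om)$ and induct on the order $k$ of the iterated variation, whereas the paper inducts on the level $L=b-a-h$ of $T$, decomposes $\db T=\sum_l\om_{i_1\cdots i_l}S_{(l)}^{i_1\cdots i_l}(g)$, and reassembles via $\De T(g,\om)=\int_0^1e^{-ht\om}\db T(e^{2t\om}g,\om)\,dt$. The polynomiality of $e^{-ht\om}T(e^{2t\om}g)$ in $t$, the identification of its Taylor coefficients with iterated variations, the full symmetry of $\db^kT$ in its scalar arguments, and your sub-claim (that varying the metric injects only $\na^s\psi_0$ factors without raising the order of any existing $\na^l\psi_i$) are all correct.

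There is, however, a gap in the inductive step. Your induction delivers, for each index $i$, \emph{some} representation of $\db^{k+1}T(g,\om_1,\ldots,\om_{k+1})$ as a linear combination of partial contractions in which the derivatives of $\om_i$ are bounded by $m$: the sub-claim applied to the representation from stage $k$ controls $\om_1,\ldots,\om_k$ but says nothing about $\om_{k+1}$ (the variation of the $\na^rR$ factors produces $\na^{r+2}\om_{k+1}$), and the symmetry swap produces a \emph{different} representation controlling $\om_{k+1},\om_2,\ldots,\om_k$ but not $\om_1$. Neither representation bounds all $k+1$ arguments at once, so the inductive hypothesis in its simultaneous form is not re-established; and without a single representation bounding every argument simultaneously, setting $\om_1=\cdots=\om_k=\om$ at the end gives no bound on the derivatives of $\om$ in $\db^kT(g,\om,\ldots,\om)$. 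The missing ingredient is the uniqueness of the canonical decomposition of the multilinear natural tensor $\db^kT$ into terms $(\om_1)_{I_1}\cdots(\om_k)_{I_k}\,S^{I_1\cdots I_k}(g)$ with $S$ symmetric in each multi-index group $I_j$ (the multi-argument analogue of the uniqueness the paper invokes for its $S_{(l)}$). Reducing each of your $k{+}1$ per-argument representations to this canonical form via the Ricci identity only lowers derivative orders, so uniqueness forces $S^{I_1\cdots I_k}=0$ whenever \emph{any} $|I_j|>m$, and the canonical form itself is then the required simultaneous representation. With that lemma inserted, your argument closes; without it, the step ``the bound on derivatives of $\om_{k+1}$ follows by swapping'' proves a strictly weaker statement than what the induction and the final specialization require.
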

\noindent
The case $m=0$ is the well-known statement that if a natural tensor is
infinitesimally conformally invariant, then it is conformally invariant.
The proof in this case is simpler than in the case $m>0$.  Clearly $\db T$
involves at most $m$ derivatives of $\om$ if and only if $\delta T$
involves at most $m$ derivatives of $\om$.  

\begin{proof}
The proof is by induction on the level $L=b-a-h$ of $T$.    
First consider the case $L=0$.  If a contraction \eqref{pcontr} 
appears in an expression for $T$, then the relation $L=2M+\sum r_i$ 
forces $M=0$.  Thus $T$ is a linear 
combination of partial contractions only of $g$ and $g^{-1}$.  Such a $T$
is conformally invariant, so the desired conclusion is automatic.  

Assume now that the result is true for natural tensors whose level 
$L$ satisfies $L\leq N$ for some $N\geq 0$.  
Suppose $T(g)$ is a natural tensor of some homogeneity $h$ and level
$N+1$, for which $\db T(g,\om)$ involves
at most $m$ derivatives of $\om$.  We can write   
\begin{equation}\label{S}
\db T(g,\om)= \om_{i_1} S_{(1)}^{i_1}(g) 
+\om_{i_1i_2}S_{(2)}^{i_1i_2}(g) +\cdots 
+\om_{i_1\cdots i_m}S_{(m)}^{i_1\cdots i_m}(g),  
\end{equation}
where for $1\leq l\leq m$, $S_{(l)}(g)$ is a natural tensor of homogeneity
$h$ whose covariant 
rank equals that of $T$ and whose contravariant rank is $l$ more than that
of $T$.  Here $\om_{i_1\cdots i_l}$ denotes the components of $\na^l\om$. 
Since the
level of $T$ is $N+1$, it follows that the level of $S_{(l)}$ is $N+1-l\leq 
N$.  
Since for each $l$, the skew-symmetrization of $\na^l\om$ in any 
two indices can be expressed by the Ricci identity in terms of the tensors
$\na^j\om$ with $1\leq j\leq l-2$ and the $\na^rR$, it follows 
inductively that  
$S_{(l)}(g)$ can be taken to be symmetric in the indices $i_1\ldots i_l$. 
Under this condition the $S_{(l)}(g)$ are uniquely determined.  

We claim that each of the  
$\db S_{(l)}(g,\om)$ involves at most $m$ derivatives of $\om$.  
To see this, take the infinitesimal conformal variation of \eqref{S} in
$g$ with respect to a conformal change $\gh_t=e^{2t\Up}g$.  The
infinitesimal conformal variation of the right hand side may be calculated
via the Leibnitz rule.  Each of the terms $\na^l\om$ has a variation
corresponding to the change of the connection.  It is clear that
$\db(\na^l \om)$ involves at most $l$ derivatives of $\om$ and $\Up$.   
Thus it follows that
\[
\begin{split}
\db^2& T(g,\om,\Up)\\
&= \om_{i_1} \db S_{(1)}^{i_1}(g,\Up) 
+\om_{i_1i_2}\db S_{(2)}^{i_1i_2}(g,\Up) +\cdots 
+\om_{i_1\cdots i_m}\db S_{(m)}^{i_1\cdots i_m}(g,\Up) +U(g,\om,\Up),
\end{split}
\]
where $U(g,\om,\Up)$ is a natural tensor depending on $g$, $\om$, and $\Up$   
which involves at most $m$ derivatives of $\om$ and $\Up$.  
Using \eqref{symmetry}, we obtain 
\[
\begin{split}
\om_{i_1} \db S_{(1)}^{i_1}(g,\Up) &
+\om_{i_1i_2}\db S_{(2)}^{i_1i_2}(g,\Up) +\cdots 
+\om_{i_1\cdots i_m}\db S_{(m)}^{i_1\cdots i_m}(g,\Up)
+\widetilde{U}(g,\om,\Up)\\
&=\Up_{i_1} \db S_{(1)}^{i_1}(g,\om) 
+\Up_{i_1i_2}\db S_{(2)}^{i_1i_2}(g,\om) +\cdots 
+\Up_{i_1\cdots i_m}\db S_{(m)}^{i_1\cdots i_m}(g,\om),
\end{split}
\]
where again $\widetilde{U}$ involves at most $m$ derivatives of $\om$ and
$\Up$.  
Since the left hand side involves at most $m$ derivatives of $\om$, the  
same is true of the right hand side.  Therefore this must also hold for
the coefficient of each of the $\Up_{i_1\cdots i_l}$.  Hence 
each of the  
$\db S_{(l)}(g,\om)$ involves at most $m$ derivatives of $\om$ as claimed.   
Thus the 
induction hypothesis applies to each of the $S_{(l)}(g)$, and we deduce
that for $1\leq l\leq m$, $\De S_{(l)}(g,\om)$ involves at most $m$
derivatives of $\om$.   

Next, recall that $\De T$ can be recovered by integrating $\db T$.  
To see this, note first that
\[
\begin{split}
\frac{d}{dt}\left(e^{-ht\om}T(e^{2t\om} g)\right)
=&\frac{d}{ds}\left(e^{-h(t+s)\om}T(e^{2(t+s)\om} g)\right)\Big{|}_{s=0}\\
=&e^{-ht\om}\frac{d}{ds}\left(e^{-hs\om}T(e^{2s\om}e^{2t\om}
g)\right)\Big{|}_{s=0}
=e^{-ht\om}\db T(e^{2t\om}g,\om).
\end{split}
\]
Thus
\begin{equation}\label{integrate}
\begin{split}
\De T(g,\om)=e^{-h\om}T(e^{2\om}g)-T(g)
=&\int_0^1 \frac{d}{dt}\left(e^{-ht\om}T(e^{2t\om} g)\right)dt\\
=&\int_0^1 e^{-ht\om}\db T(e^{2t\om}g,\om)dt.
\end{split}
\end{equation}
Apply \eqref{S} to evaluate $\db T(e^{2t\om}g,\om)$.  The occurrences of
$\na^l\om$, $1\leq l\leq m$ on the right hand side of \eqref{S} now have to
be evaluated using the Levi-Civita connection of $e^{2t\om}g$.  It is clear
that for fixed $t$, each such evaluation gives rise to a natural tensor
depending on $g$ and $\om$ which involves at most $m$ derivatives of 
$\om$.  Likewise, for each $t$ we have 
$$
e^{-ht\om}S_{(l)}(e^{2t\om}g) = S_{(l)}(g) + \De S_{(l)}(g,t\om),
$$
and the right hand side is a family parametrized by $t$ of natural tensors  
depending on $g$ and $\om$ which involves at most $m$ derivatives of $\om$.   
Substituting into \eqref{integrate} and integrating in $t$, it follows that 
$\De T(g,\om)$ involves at most $m$ derivatives of $\om$.  This completes
the induction step.
\end{proof}

\end{document}